\numberwithin{equation}{section}
\newtheorem{theorem}{Theorem}[section]
\theoremstyle{plain}
\newtheorem{acknowledgement}{Acknowledgement}
\newtheorem{corollary}[theorem]{Corollary}
\newtheorem{lemma}{Lemma}[section]
\numberwithin{equation}{section}
\begin{document}
\title[Fekete-Szegö Problem for Generalized Bi-Subordinate Functions of
Complex Order]{Fekete-Szegö Problem for Generalized Bi-Subordinate Functions
of Complex Order}
\author{\textbf{\ Sercan Topkaya}}
\address{(S. Topkaya and E. Deniz), \textit{Department of Mathematics$,$
Faculty of Science} \textit{and Letters,} \textit{Kafkas University$,$ Kars$%
, $ TURKEY} }
\email{topkaya.sercan@hotmail.com(S. Topkaya) and edeniz36@gmail.com (E.
Deniz)}
\author{\textbf{Erhan Deniz}}
\keywords{Analytic functions, starlike functions, convex functions, Ma-Minda
starlike functions, Ma-Minda convex functions, subordination, Fekete-Szegö
Inequality.\\
{\indent\textrm{2010 }}\ \textit{Mathematics Subject Classification:}
Primary 30C45; Secondary 30C80.}

\begin{abstract}
In this paper, we obtain Fekete-Szegö inequality for the generalized
bi-subordinate functions of complex order. The results, which are presented
in this study, would generalize those in related works of several earlier
authors.
\end{abstract}

\maketitle

\section{\textbf{INTRODUCTION}}

Let $\mathcal{A}$ be the class of analytic functions in the open unit disk $%
\mathbb{D}=\{z\in \mathbb{%
\mathbb{C}
}:$ $\left\vert z\right\vert <1\}$ and let $\mathcal{S}$ be the class of
functions $f$ that are analytic and univalent in $\mathbb{D}$ and are of the
form%
\begin{equation}
f\left( z\right) =z+\sum_{k=2}^{\infty }a_{k}z^{k}.  \label{1}
\end{equation}

A function $f\in \mathcal{A}$ is said to be subordinate to a function $g\in 
\mathcal{A},$ denoted by $f\prec g,$ if there exists a function $w$ $\in 
\mathcal{B}_{0}$ where 
\begin{equation*}
\mathcal{B}_{0}:=\left\{ w\in \mathcal{A}:w\left( 0\right) =0,\text{ }%
\left\vert w\left( z\right) \right\vert <1,\text{ \ \ }\left( z\in \mathbb{D}%
\right) \right\} ,
\end{equation*}%
such that 
\begin{equation*}
f(z)=g(w(z)),\text{ \ \ }\left( z\in \mathbb{D}\right) .
\end{equation*}

We let $\mathcal{S}^{\ast }$ consist of starlike functions $f\in \mathcal{A}$%
, that is $\Re\{zf^{\prime }(z)\diagup f(z)\}>0$ in $\mathbb{D}$ and $%
\mathcal{C}$ consist of convex functions $f\in \mathcal{A}$, that is $1+%
\Re\{zf^{\prime \prime }(z)\diagup f^{\prime }(z)\}>0$ in $\mathbb{D}$. In
terms of subordination, these conditions are, respectively, equivalent to \ 
\begin{equation*}
\mathcal{S}^{\ast }\equiv \left\{ f\in \mathcal{A}:\frac{zf^{\prime }(z)}{%
f(z)}\prec \frac{1+z}{1-z}\right\}
\end{equation*}%
and%
\begin{equation*}
\text{\ }\mathcal{C}\equiv \left\{ f\in \mathcal{A}:1+\frac{zf^{\prime
\prime }(z)}{f^{\prime }(z)}\prec \frac{1+z}{1-z}\right\} .
\end{equation*}%
A generalization of the above two classes, according to Ma and Minda \cite%
{Ma}, are%
\begin{equation*}
\text{\ }\mathcal{S}^{\ast }(\varphi )\equiv \left\{ f\in \mathcal{A}:\text{ 
}\frac{zf^{\prime }(z)}{f(z)}\prec \varphi (z)\right\}
\end{equation*}%
and%
\begin{equation*}
\text{\ }\mathcal{C}(\varphi )\equiv \left\{ f\in \mathcal{A}:\text{ }1+%
\frac{zf^{\prime \prime }(z)}{f^{\prime }(z)}\prec \varphi (z)\right\}
\end{equation*}%
where $\varphi $ \ is a positive real part function normalized by $\varphi
(0)=1$, $\varphi ^{\prime }(0)>0$ and $\varphi $ maps $D$ onto a region
starlike with respect to $1$ and symmetric with respect to the real axis.
Obvious extensions of the above two classes (see \cite{Ra}) are%
\begin{equation*}
\mathcal{S}^{\ast }(\gamma ;\varphi )\equiv \left\{ f\in A:1+\frac{1}{\gamma 
}\left( \frac{zf^{\prime }(z)}{f(z)}-1\right) \prec \varphi (z);\text{ \ \ }%
\gamma \in 
\mathbb{C}
\backslash \left\{ 0\right\} \right\}
\end{equation*}%
and%
\begin{equation*}
\text{\ }C(\gamma ;\varphi )\equiv \left\{ f\in A:1+\frac{1}{\gamma }\left( 
\frac{zf^{\prime \prime }(z)}{f^{\prime }(z)}\right) \prec \varphi (z);\text{
\ \ }\gamma \in 
\mathbb{C}
\backslash \left\{ 0\right\} \right\} .
\end{equation*}%
In literature, the functions belonging to these classes are called Ma-Minda s%
\textit{tarlike and convex of complex order} $\gamma $ $\left( \gamma \in 
\mathbb{C}
\diagdown \{0\}\right) $, respectively.

Some of the special cases of the above two classes \ $S^{\ast }(\gamma
;\varphi )$\ and $C(\gamma ;\varphi )$\ are
\begin{itemize}
\item[(1)] $\mathcal{S}^{\ast }\left( (1,(1+Az)\diagup (1+Bz)\right) =\mathcal{S}%
[A,B]$ and $\mathcal{C}(1,(1+Az)\diagup (1+Bz))=\mathcal{C}[A,B],$ $\left(
-1\leq B<A\leq 1\right) $ are classes of Janowski starlike and convex
functions, respectively,

\item[(2)] $\mathcal{S}^{\ast }((1-\beta )e^{-i\delta }\cos \delta ,(1+z)\diagup
(1-z))=\mathcal{S}^{\ast }[\delta ,\beta ]$ and $\mathcal{C}((1-\beta
)e^{-i\delta }\cos \delta ,(1+z)\diagup (1-z))=\mathcal{C}[\delta ,\beta ],$ 
$\left( \left\vert \delta \right\vert <\pi \diagup 2,\text{ }0\leq \beta
<1\right) $ are classes of $\delta $-spirallike and $\delta $-Robertson
univalent functions of order $\beta ,$ respectively,

\item[(3)] $\mathcal{S}^{\ast }(1,(1+(1-2\beta )z)\diagup (1-z))=\mathcal{S}^{\ast
}(\beta )$ and $\mathcal{C}\left( 1,(1+(1-2\beta )z)\right) \diagup (1-z))=%
\mathcal{C}(\beta )$ $\left( 0\leq \beta <1\right) $ are classes of starlike
and convex functions of order $\beta ,$ respectively,

\item[(4)] $\mathcal{S}^{\ast }\left( 1,\left( \frac{1+z}{1-z}\right) ^{\beta
}\right) =\mathcal{S}_{\beta }^{\ast }$ and $\mathcal{C}\left( 1,\left( 
\frac{1+z}{1-z}\right) ^{\beta }\right) =\mathcal{C}_{\beta }$ are class of
strongly starlike and convex functions of order $\beta ,$ respectively,

(\item[(5)] $\mathcal{S}^{\ast }\left( 1,\sqrt{1+z}\right) =\mathcal{S}_{L}^{\ast
}=\left\{ f\in \mathcal{A}:\left\vert \left( \frac{zf^{\prime }(z)}{f(z)}%
\right) ^{2}-1\right\vert <1\right\} $ is class of lemniscate starlike
functions,

\item[(6)] $\mathcal{S}^{\ast }(\gamma ,(1+z)\diagup (1-z))=\mathcal{S}^{\ast
}[\gamma ]$ and $\mathcal{C}(\gamma ,(1+z)\diagup (1-z))=\mathcal{C}[\gamma
] $ $\left( \gamma \in 
\mathbb{C}
\diagdown \{0\}\right) $ are classes of starlike and convex functions of
complex order, respectively,

\item[(7)] $\mathcal{S}^{\ast }\left( 1,q_{k}(z)\right) =k-\mathcal{S}_{P}^{\ast
}=\left\{ f\in \mathcal{A}: \Re \left( \frac{zf^{\prime }(z)}{f(z)}%
\right) >k\left\vert \frac{zf^{\prime }(z)}{f(z)}-1\right\vert \right\} $ is
class of $k-$parabolik starlike functions,

\item[(8)] $\mathcal{C}\left( 1,q_{k}(z)\right) =k-\mathcal{UCV}=\left\{ f\in 
\mathcal{A}:\Re \left( 1+\frac{zf^{\prime \prime }(z)}{f^{\prime }(z)}%
\right) >k\left\vert \frac{zf^{\prime \prime }(z)}{f^{\prime }(z)}%
\right\vert \right\} $ is class of $k-$uniformly convex functions.
\end{itemize}
Here, for $0\leq k<\infty $ the function $q_{k}:$ $\mathbb{D\rightarrow \{}%
w=u+iv\in 
\mathbb{C}
:$ $\ u^{2}>k^{2}\left( (u-1)^{2}+v^{2}\right) ,u>0\mathbb{\}}$ has the form 
$q_{k}(z)=1+Q_{1}z+Q_{2}z^{2}+...,$\ $\left( {z\in }\mathbb{D}\right) $ where%
\begin{equation}
Q_{1}=\left\{ {%
\begin{array}{l}
\frac{2\mathcal{B}^{2}}{1-k^{2}};\;\;\;\;\;\;\;\;\;\;\;\;\ \ \ \ 0\leqslant
k<1, \\ 
\frac{8}{\pi ^{2}};\;\ \;\;\;\;\;\;\;\;\;\;\;\;\;\;\ \ \ \ \ \ \ k=1, \\ 
\frac{\pi ^{2}}{4(k^{2}-1)\sqrt{t}(1+t)\mathcal{K}^{2}(t)};\text{ \ \ \ }%
k>1,\;\;%
\end{array}%
}\right. ,\text{ \ }Q_{2}=\left\{ {%
\begin{array}{l}
\frac{(\mathcal{B}^{2}+2)}{3}Q_{1};\;\;\;\;\;\;\;\;\;\;\;\;\ \ \ \
0\leqslant k<1, \\ 
\frac{2}{3}Q_{1};\;\ \;\;\;\;\;\;\;\;\;\;\;\;\;\;\ \ \ \ \ \ \ \ \ k=1, \\ 
\frac{\left[ {4\mathcal{K}^{2}(t)(t^{2}+6t+1)-\pi ^{2}}\right] }{24\sqrt{t}%
(1+t)\mathcal{K}^{2}(t)}Q_{1};\text{ \ \ \ }k>1,\;\;%
\end{array}%
}\right.  \label{eq171}
\end{equation} 
\noindent with $\mathcal{B}=\frac{2}{\pi }\arccos k$ \noindent and $\mathcal{%
K}(t)$ is the complete elliptic integral of first kind (see \cite{Ka}).

A function $f\in \mathcal{A}$ is said to be bi-univalent in $\mathbb{D}$ if
both $f$ and its inverse map $f^{-1}$ are univalent in $\mathbb{D}.$ Let $%
\sigma $ be the class functions $f\in \mathcal{S}$ that are bi-univalent in $%
\mathbb{D}$. For a brief history and interesting examples of functions which
are in (or are not in) in the class $\sigma $, including various properties
of such functions we refer the reader to the work of Srivastava et al. \cite%
{Sri} and references therein. Bounds for the first few coefficients of
various subclasses of bi-univalent functions were obtained by a variety of
authors including \cite{R,CaOrNi,Er1}, \cite{Fra}, \cite{Ku}, \cite%
{Sri2,HM2,Qing1,Qing2}. Not much was known about the bounds of the general
coefficients $a_{n};n\geq 4$\ of subclasses of $\sigma $ up until the
publication of the article \cite{JH1} by Jahangiri and Hamidi and followed
by a number of related publications. Moreover, many author have considered
the Fekete-Szegö problem for various subclasses of $\mathcal{A}$, the upper
bound for $\left\vert a_{3}-\mu a_{2}^{2}\right\vert $ is investigated by
many different authors $\left( \text{see}\left[ \text{\cite{HM3}},\text{\cite%
{zapr14}},\text{\cite{hoafr}},\text{\cite{hem12}}\right] \right) .$\ In this
paper, we apply the Fekete-Szegö inequality to certain subclass of
generalized bi-subordinate functions of complex order.

\section{\textbf{Coefficient Estimates}}

In the sequel, it is assumed that $\varphi $ is an analytic function with
positive real part in the unit disk $\mathbb{D}$, satisfying $\varphi (0)=1,$
$\varphi ^{\prime }(0)>0,$ and $\varphi (\mathbb{D})$ is symmetric with
respect to the real axis. Such a function is known to be typically real with
the series expansion $\varphi (z)=1+B_{1}z+B_{2}z^{2}+B_{3}z^{3}+...$ where $%
B_{1}$, $B_{2}$ are real and\ $B_{1}>0.$\ Motivated by a class of functions
defined by the first author \cite{Er1}, we define the following
comprehensive class of analytic functions 
\begin{equation*}
\text{\ }\mathcal{S}(\lambda ,\gamma ;\varphi )\equiv \left\{ f\in A:1+\frac{%
1}{\gamma }\left( \frac{zf^{\prime }(z)+\lambda z^{2}f^{\prime \prime }(z)}{%
(1-\lambda )f(z)-\lambda zf^{\prime }(z)}-1\right) \prec \varphi (z)\text{; }%
0\leq \lambda \leq 1\text{, }\gamma \in 
\mathbb{C}
\backslash \left\{ 0\right\} .\right\}
\end{equation*}

A function $f\in \mathcal{A}$ is siad the be generalized bi-subordinate of
complex order $\gamma $\ and type $\lambda $ if both $f$\ and its inverse
map $g=f^{-1}$\ are in \ $\mathcal{S}(\lambda ,\gamma ;\varphi )$. As
special cases of the class \ $\mathcal{S}(\lambda ,\gamma ;\varphi )$\ we
have \ $\mathcal{S}(0,\gamma ;\varphi )\equiv $\ $\mathcal{S}^{\ast }\left(
\gamma ;\varphi \right) $ and $\mathcal{S}\left( 1,\gamma ;\varphi \right)
\equiv $\ $\mathcal{C}\left( \gamma ;\varphi \right) $.

To prove our next theorems, we shall need the following well-known lemma
(see \cite{keog}).

\begin{lemma}
\label{l1} (see \cite{keog})Let the function $w\in \mathcal{B}_{0}$ be given
by%
\begin{equation*}
w(z)=c_{1}z+c_{2}z^{2}+\cdot \cdot \cdot \left( z\in \mathbb{D}\right) ,
\end{equation*}%
then for by every complex number $s,$%
\begin{equation*}
\left\vert c_{2}-sc_{1}^{2}\right\vert \leq 1+\left( \left\vert s\right\vert
-1\right) \left\vert c_{1}\right\vert ^{2}.
\end{equation*}
\end{lemma}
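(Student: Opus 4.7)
The plan is to deduce the stated bound from the classical coefficient estimate $|c_2| \le 1 - |c_1|^2$ for Schwarz functions, followed by a single application of the triangle inequality. The strategy is therefore two steps: first prove the coefficient bound, then combine.

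For the first step, I would observe that the quotient $\psi(z) := w(z)/z = c_1 + c_2 z + c_3 z^2 + \cdots$ extends holomorphically to $\mathbb{D}$ (since $w(0)=0$), and the Schwarz lemma applied to $w$ gives $|w(z)| \le |z|$ on $\mathbb{D}$; consequently $|\psi(z)| \le 1$ throughout $\mathbb{D}$. Thus $\psi$ is a holomorphic self-map of the closed unit disk with $\psi(0)=c_1$ and $\psi'(0)=c_2$. Applying the Schwarz--Pick inequality at the origin then yields
\begin{equation*}
|c_2| = |\psi'(0)| \le 1 - |\psi(0)|^2 = 1 - |c_1|^2 .
\end{equation*}

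For the second step, for any complex number $s$ the triangle inequality gives
\begin{equation*}
|c_2 - s c_1^2| \le |c_2| + |s| \, |c_1|^2 \le \bigl(1 - |c_1|^2\bigr) + |s|\,|c_1|^2 = 1 + (|s|-1)|c_1|^2,
\end{equation*}
which is exactly the asserted inequality.

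There is really no serious obstacle here; the result is classical and the only nontrivial ingredient is recognizing that $w(z)/z$ is a holomorphic self-map of $\mathbb{D}$ so that Schwarz--Pick applies. An alternative (and essentially equivalent) route would be to write the bound for the second Taylor coefficient of a bounded analytic function directly via Parseval on a circle $|z|=r<1$ and letting $r \to 1^-$, but the Schwarz--Pick derivation above is the cleanest.
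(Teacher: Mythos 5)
Your proof is correct. Note, however, that the paper does not prove this lemma at all --- it simply quotes it from Keogh and Merkes --- so there is no in-paper argument to compare against; your two-step derivation (the Schwarz-function bound $|c_2|\le 1-|c_1|^2$ followed by the triangle inequality) is exactly the standard one. The only point worth tightening is the appeal to Schwarz--Pick: $\psi(z)=w(z)/z$ maps $\mathbb{D}$ into the \emph{closed} disk, so you should either invoke the maximum principle to reduce to the case $\psi(\mathbb{D})\subset\mathbb{D}$ (the excluded case being $\psi$ a unimodular constant, where $|c_1|=1$, $c_2=0$ and the inequality is immediate), or use the coefficient inequality $|c_2|\le 1-|c_1|^2$ for bounded functions directly.
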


In the following theorem, we consider fuctional $\left\vert a_{3}-\mu
a_{2}^{2}\right\vert $ for $\gamma $ nonzero complex number and $\mu \in 
\mathbb{C}
.$

\begin{theorem}
\label{T1}\textit{Let }$\gamma $ be a nonzero complex number, $\mu \in 
\mathbb{C}
$ \ and $0\leq \lambda \leq 1.$ If both functions $f$ of the form (\ref{1})
and its inverse maps $g=f^{-1}$ are in $\mathcal{S}(\lambda ,\gamma ;\varphi
),$ then we obtain,%
\begin{equation}
\left\vert a_{2}\right\vert \leq \frac{\left\vert \gamma \right\vert B_{1}}{%
\left( 1+\lambda \right) },  \label{t11}
\end{equation}%
\begin{equation}
\left\vert a_{3}\right\vert \leq \frac{\left\vert \gamma \right\vert
\left\vert B_{1}\right\vert }{4\left( 1+2\lambda \right) }\max \{2,\left(
\left\vert s\right\vert +\left\vert t\right\vert \right) \}  \label{t12}
\end{equation}%
and%
\begin{equation}
\left\vert a_{3}-\mu a_{2}^{2}\right\vert \leq \left\{ 
\begin{array}{c}
\frac{B_{1}\left\vert \gamma \right\vert }{2\left( 1+2\lambda \right) }\text{%
\ \ \ \ \ \ \ \ \ \ \ \ \ \ \ \ if \ }\mathcal{L} \leq 2, \\ 
\frac{B_{1}\left\vert \gamma \right\vert }{4\left( 1+2\lambda \right) }
{L} \text{\ \ \ \ \ \ \ \ \ \ \ \ \ if \ }\mathcal{L} \geq 2,
\end{array}%
\right.  \label{t13}
\end{equation}%
where $s=\frac{B_{2}}{B_{1}}-\frac{4B_{1}\gamma \left( 1+2\lambda \right) }{%
\left( 1+\lambda \right) ^{2}},$ $t=\frac{B_{2}}{B_{1}}$ and $\mathcal{%
{L} =}\left\vert \frac{B_{2}}{B_{1}}+\left( 1-\mu \right) \frac{%
4B_{1}\gamma \left( 1+2\lambda \right) }{\left( 1+\lambda \right) ^{2}}%
\right\vert +\left\vert \frac{B_{2}}{B_{1}}\right\vert .$
\end{theorem}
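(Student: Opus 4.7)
The plan is to extract power-series identities from the two subordination hypotheses, reduce everything to coefficients of two Schwarz functions, and apply Lemma~\ref{l1}.

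First I would invoke the definition of $\mathcal{S}(\lambda,\gamma;\varphi)$ for both $f$ and $g=f^{-1}$ to introduce Schwarz functions $w_{1}(z)=c_{1}z+c_{2}z^{2}+\cdots$ and $w_{2}(z)=d_{1}z+d_{2}z^{2}+\cdots$ making the respective left-hand sides equal $\varphi(w_{1}(z))$ and $\varphi(w_{2}(z))$. Expanding both sides through order $z^{2}$ and equating coefficients produces two pairs of relations: one pair expresses $a_{2},a_{3}$ in terms of $c_{1},c_{2},B_{1},B_{2},\gamma,\lambda$; the other expresses $b_{2}=-a_{2}$ and $b_{3}=2a_{2}^{2}-a_{3}$ in terms of $d_{1},d_{2}$ and the same data. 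The first-order equations immediately give $a_{2}=\gamma B_{1}c_{1}/(1+\lambda)=-\gamma B_{1}d_{1}/(1+\lambda)$, whence $d_{1}=-c_{1}$, $c_{1}^{2}=d_{1}^{2}$, and (\ref{t11}) follows from $|c_{1}|\le 1$.

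Next, substituting $a_{2}^{2}=\gamma^{2}B_{1}^{2}c_{1}^{2}/(1+\lambda)^{2}$ into the two second-order equations and subtracting, the $c_{1}^{2}-d_{1}^{2}$ contribution cancels and I obtain $a_{3}-a_{2}^{2}=\gamma B_{1}(c_{2}-d_{2})/[4(1+2\lambda)]$. Combining with $(1-\mu)a_{2}^{2}$ yields
\[
a_{3}-\mu a_{2}^{2}=\frac{\gamma B_{1}}{4(1+2\lambda)}\bigl[(c_{2}-d_{2})+(1-\mu)T\,c_{1}^{2}\bigr],\qquad T=\tfrac{4B_{1}\gamma(1+2\lambda)}{(1+\lambda)^{2}}.
\]
The key algebraic step is to use $c_{1}^{2}=d_{1}^{2}$ to add and subtract $(B_{2}/B_{1})c_{1}^{2}$ and $(B_{2}/B_{1})d_{1}^{2}$, regrouping the bracket in the form $\pm\bigl[(c_{2}-sc_{1}^{2})-(d_{2}-td_{1}^{2})\bigr]$ with $t=B_{2}/B_{1}$ and $s$ pinned down by the requirement that the residual $c_{1}^{2}$-term vanish. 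Specialising to $\mu=0$ gives $s=B_{2}/B_{1}-T$, which feeds into the bound (\ref{t12}); for general $\mu$ the analogous choice produces the quantities appearing in $\mathcal{L}$.

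Finally, Lemma~\ref{l1} applied to each Schwarz piece delivers $|c_{2}-sc_{1}^{2}|+|d_{2}-td_{1}^{2}|\le 2+(|s|+|t|-2)|c_{1}|^{2}$, which is affine in $|c_{1}|^{2}\in[0,1]$: the maximum is $2$ when $|s|+|t|\le 2$ and $|s|+|t|$ otherwise, producing the piecewise form of (\ref{t13}) after multiplication by the prefactor. I expect the main obstacle to be the bookkeeping in the regrouping step, where one must carefully track the signs of $(1-\mu)T$ and $B_{2}/B_{1}$ and must exploit $c_{1}^{2}=d_{1}^{2}$ (rather than $c_{1}=d_{1}$) to absorb a $d_{1}^{2}$ term into a $c_{1}^{2}$ term; everything else — extracting the coefficients and invoking the lemma — is routine.
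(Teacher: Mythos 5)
Your proposal matches the paper's own argument step for step: the same two Schwarz functions, the same coefficient identities giving $d_1=-c_1$ and $a_3-a_2^2=\frac{\gamma B_1(c_2-d_2)}{4(1+2\lambda)}$, the same regrouping into $\left[c_2-s\,c_1^2\right]-\left[d_2-t\,d_1^2\right]$, and the same application of Lemma~\ref{l1} followed by maximizing an affine function of $\vert c_1\vert^2$ over $[0,1]$. It is correct (indeed, your sign for the $(1-\mu)$-term inside $\mathcal{L}$ agrees with the expression actually used in the paper's proof, equation (\ref{p17}), rather than the sign printed in the theorem statement), so there is nothing further to add.
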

\begin{proof}
\label{P1}Let $f(z)\in \mathcal{S}(\lambda ,\gamma ;\varphi )$ and $g=f^{-1}$%
. Then there are two functions $u(z)=c_{1}z+c_{2}z^{2}+\cdot \cdot \cdot
\left( z\in \mathbb{D}\right) $ and $v(w)=d_{1}w+d_{2}w^{2}+\cdot \cdot
\cdot ,$ such that 
\begin{equation*}
1+\frac{1}{\gamma }\left( \frac{zf^{\prime }(z)+\lambda z^{2}f^{\prime
\prime }(z)}{(1-\lambda )f(z)-\lambda zf^{\prime }(z)}-1\right) =\varphi
(u(z))\text{\ }
\end{equation*}%
\begin{equation}
=1+\frac{\left( 1+\lambda \right) a_{2}}{\gamma }z+\left[ \frac{2\left(
1+2\lambda \right) a_{3}-\left( 1-\lambda \right) ^{2}a_{2}^{2}}{\gamma }%
\right] z^{2}+\cdot \cdot \cdot  \label{p11}
\end{equation}%
and 
\begin{equation*}
1+\frac{1}{\gamma }\left( \frac{wg^{\prime }(w)+\lambda w^{2}g^{\prime
\prime }(w)}{(1-\lambda )g(w)-\lambda wg^{\prime }(w)}-1\right) =\varphi
(v(w))
\end{equation*}%
\begin{equation}
=1-\frac{\left( 1+\lambda \right) a_{2}}{\gamma }w+\left[ \frac{-2\left(
1+2\lambda \right) a_{3}+\left( 3+6\lambda -\lambda ^{2}\right) a_{2}^{2}}{%
\gamma }\right] w^{2}+\cdot \cdot \cdot  \label{p12}
\end{equation}%
Equating coefficients of both side of equations (\ref{p11}) and (\ref{p12})
yield%
\begin{eqnarray}
\frac{\left( 1+\lambda \right) a_{2}}{\gamma } &=&B_{1}c_{1},\text{ \ \ \ \ }%
\frac{2\left( 1+2\lambda \right) a_{3}-\left( 1-\lambda \right) ^{2}a_{2}^{2}%
}{\gamma }=B_{1}c_{2}+B_{2}c_{1}^{2},  \label{p13} \\
\frac{-\left( 1+\lambda \right) a_{2}}{\gamma } &=&B_{1}d_{1},\text{ \ \ \ \ 
}\frac{-2\left( 1+2\lambda \right) a_{3}+\left( 3+6\lambda -\lambda
^{2}\right) a_{2}^{2}}{\gamma }=B_{1}d_{2}+B_{2}d_{1}^{2},  \label{p10}
\end{eqnarray}%
so that, on account of (\ref{p13}) and (\ref{p10}) 
\begin{eqnarray}
c_{1} &=&-d_{1},\text{ \ }  \label{p14} \\
\text{\ }a_{2} &=&\frac{\gamma B_{1}}{\left( 1+\lambda \right) }c_{1}
\label{p100}
\end{eqnarray}%
and%
\begin{equation}
a_{3}=a_{2}^{2}+\frac{\gamma }{4\left( 1+2\lambda \right) }\left[
B_{1}c_{2}+B_{2}c_{1}^{2}-B_{1}d_{2}-B_{2}d_{1}^{2}\right] ,\text{ \ \ \ \ \ 
}\left\vert c_{k}\right\vert \leq 1.  \label{p15}
\end{equation}

Taking into account (\ref{p14}), (\ref{p100}), (\ref{p15}) and Lemma \ref{l1}%
, we obtain

\begin{equation}
\left\vert a_{2}\right\vert =\left\vert \frac{\gamma B_{1}}{\left( 1+\lambda
\right) }c_{1}\right\vert \leq \frac{\left\vert \gamma \right\vert B_{1}}{%
\left( 1+\lambda \right) }  \label{p16}
\end{equation}%
and 
\begin{eqnarray*}
\left\vert a_{3}\right\vert &=&\left\vert a_{2}^{2}+\frac{\gamma }{4\left(
1+2\lambda \right) }\left[
B_{1}c_{2}+B_{2}c_{1}^{2}-B_{1}d_{2}-B_{2}d_{1}^{2}\right] \right\vert \\
&=&\left\vert \frac{\gamma ^{2}B_{1}^{2}}{\left( 1+\lambda \right) ^{2}}%
c_{1}^{2}+\frac{\gamma }{4\left( 1+2\lambda \right) }\left[ \left(
B_{1}c_{2}-B_{2}d_{1}^{2}\right) -\left( B_{1}d_{2}-B_{2}c_{1}^{2}\right) %
\right] \right\vert \\
&=&\left\vert \frac{\gamma ^{2}B_{1}^{2}}{\left( 1+\lambda \right) ^{2}}%
c_{1}^{2}+\frac{\gamma }{4\left( 1+2\lambda \right) }\left[ \left(
B_{1}c_{2}-B_{2}c_{1}^{2}\right) -\left( B_{1}d_{2}-B_{2}d_{1}^{2}\right) %
\right] \right\vert \\
&=&\left\vert \frac{\gamma B_{1}}{4\left( 1+2\lambda \right) }\left\{ \left[
c_{2}-\left( \frac{B_{2}}{B_{1}}-\frac{4\gamma B_{1}\left( 1+2\lambda
\right) }{\left( 1+\lambda \right) ^{2}}\right) c_{1}^{2}\right] -\left[
d_{2}-\frac{B_{2}}{B_{1}}d_{1}^{2}\right] \right\} \right\vert \\
&\leq &\frac{\left\vert \gamma \right\vert B_{1}}{4\left( 1+2\lambda \right) 
}\left\{ \left\vert c_{2}-\left( \frac{B_{2}}{B_{1}}-\frac{4\gamma
B_{1}\left( 1+2\lambda \right) }{\left( 1+\lambda \right) ^{2}}\right)
c_{1}^{2}\right\vert +\left\vert d_{2}-\frac{B_{2}}{B_{1}}%
d_{1}^{2}\right\vert \right\} \\
&\leq &\frac{\left\vert \gamma \right\vert B_{1}}{4\left( 1+2\lambda \right) 
}\left\{ 1+\left( \left\vert s\right\vert -1\right) \left\vert
c_{1}^{2}\right\vert +1+\left( \left\vert t\right\vert -1\right) \left\vert
c_{1}^{2}\right\vert \right\} \\
&=&\frac{\left\vert \gamma \right\vert B_{1}}{4\left( 1+2\lambda \right) }%
\left\{ 2+\left( \left\vert s\right\vert +\left\vert t\right\vert -2\right)
\left\vert c_{1}^{2}\right\vert \right\} \\
&\leq &\frac{\left\vert \gamma \right\vert B_{1}}{4\left( 1+2\lambda \right) 
}\max \left\{ 2,\left( \left\vert s\right\vert +\left\vert t\right\vert
\right) \right\} .
\end{eqnarray*}%
Thus, we have 
\begin{equation*}
\left\vert a_{3}\right\vert \leq \frac{\left\vert \gamma \right\vert
\left\vert B_{1}\right\vert }{4\left( 1+2\lambda \right) }\max \{2,\left(
\left\vert s\right\vert +\left\vert t\right\vert \right) \},
\end{equation*}%
where $s=\frac{B_{2}}{B_{1}}-\frac{4B_{1}\gamma \left( 1+2\lambda \right) }{%
\left( 1+\lambda \right) ^{2}}$ and $t=\frac{B_{2}}{B_{1}}.$

Furthermore, 
\begin{eqnarray}
\left\vert a_{3}-\mu a_{2}^{2}\right\vert &=&\left\vert \left( 1-\mu \right)
a_{2}^{2}+\frac{\gamma }{4\left( 1+2\lambda \right) }\left[
B_{1}c_{2}+B_{2}c_{1}^{2}-B_{1}d_{2}-B_{2}d_{1}^{2}\right] \right\vert 
\notag \\
&=&\left\vert \frac{\gamma B_{1}}{4\left( 1+2\lambda \right) }\left\{ \left[
c_{2}-\left( \frac{B_{2}}{B_{1}}-\left( 1-\mu \right) \frac{4\gamma
B_{1}\left( 1+2\lambda \right) }{\left( 1+\lambda \right) ^{2}}\right)
c_{1}^{2}\right] -\left[ d_{2}-\frac{B_{2}}{B_{1}}d_{1}^{2}\right] \right\}
\right\vert  \notag \\
&\leq &\frac{\left\vert \gamma \right\vert B_{1}}{4\left( 1+2\lambda \right) 
}\left\{ 2+\left( \left\vert \frac{B_{2}}{B_{1}}-\left( 1-\mu \right) \frac{%
4\gamma B_{1}\left( 1+2\lambda \right) }{\left( 1+\lambda \right) ^{2}}%
\right\vert +\left\vert \frac{B_{2}}{B_{1}}\right\vert -2\right) \left\vert
c_{1}^{2}\right\vert \right\} .  \label{p17}
\end{eqnarray}%
As a result of this, we obtain%
\begin{equation*}
\left\vert a_{3}-\mu a_{2}^{2}\right\vert \leq \left\{ 
\begin{array}{c}
\frac{B_{1}\left\vert \gamma \right\vert }{2\left( 1+2\lambda \right) }\text{%
\ \ \ \ \ \ \ \ \ \ \ \ \ \ \ \ if \ } \mathcal{L} <2, \\ 
\frac{B_{1}\left\vert \gamma \right\vert }{4\left( 1+2\lambda \right) }%
\mathcal{L} \text{\ \ \ \ \ \ \ \ \ \ \ \ \ if \ }
\mathcal{L} \geq 2,
\end{array}
\right. .\text{ }
\end{equation*}%
where $\mathcal{L} =\left\vert \frac{B_{2}}{B_{1}}+\left( 1-\mu
\right) \frac{4B_{1}\gamma \left( 1+2\lambda \right) }{\left( 1+\lambda
\right) ^{2}}\right\vert +\left\vert \frac{B_{2}}{B_{1}}\right\vert .$

Thus the proof is completed.
\end{proof}

We next consider the case, \ when $\gamma $ and $\mu $ are real. Then we
have:

\begin{theorem}
\label{T2}Let $\gamma >0$ and if both functions $f$ of the form (\ref{1})
and its inverse maps $g=f^{-1}$ are in $\mathcal{S}(\lambda ,\gamma ;\varphi
),$ then for $\mu \in 
\mathbb{R}
,$
\begin{itemize}
\item[(1)] If $\left\vert B_{2}\right\vert \geq B_{1},$ we have 
\begin{equation*}
\left\vert a_{3}-\mu a_{2}^{2}\right\vert \leq \left\{ 
\begin{array}{c}
\frac{\gamma \left\vert B_{2}\right\vert }{2\left( 1+2\lambda \right) }%
-\left( \mu -1\right) \text{ }\frac{\gamma ^{2}B_{1}^{2}}{\left( 1+\lambda
\right) ^{2}}\text{\ \ \ \ \ \ if \ }\mu \leq 1, \\ 
\frac{\gamma \left\vert B_{2}\right\vert }{2\left( 1+2\lambda \right) }%
+\left( \mu -1\right) \text{ }\frac{\gamma ^{2}B_{1}^{2}}{\left( 1+\lambda
\right) ^{2}}\text{\ \ \ \ \ \ if \ }\mu >1,%
\end{array}%
\right.
\end{equation*}%
\item[(2)] If $\left\vert B_{2}\right\vert <B_{1},$ we have 
\begin{equation*}
\left\vert a_{3}-\mu a_{2}^{2}\right\vert \leq \left\{ 
\begin{array}{c}
\frac{\gamma \left\vert B_{2}\right\vert }{2\left( 1+2\lambda \right) }%
-\left( \mu -1\right) \text{ }\frac{\gamma ^{2}B_{1}^{2}}{\left( 1+\lambda
\right) ^{2}}\text{\ \ \ \ \ \ \ \ \ \ \ \ \ \ \ \ if \ }\mu \leq 1-%
{\normalsize \mathcal{F} }, \\ 
\frac{\gamma B_{1}}{2\left( 1+2\lambda \right) }\text{\ \ \ \ \ \ \ \ \
\ \ \ \ \ \ \ \ \ \ \ \ \ \ \ \ \ \ \ \ \ if }1-{\normalsize \mathcal{F} <}%
\mu <1+{\normalsize \mathcal{F} }\text{\ }, \\ 
\frac{\gamma \left\vert B_{2}\right\vert }{2\left( 1+2\lambda \right) }%
+\left( \mu -1\right) \text{ }\frac{\gamma ^{2}B_{1}^{2}}{\left( 1+\lambda
\right) ^{2}}\text{\ \ \ \ \ \ \ \ \ \ \ \ \ \ \ if \ }\mu \geq 1+%
{\normalsize \mathcal{F} },%
\end{array}%
\right.
\end{equation*}
\end{itemize}
where ${\normalsize \mathcal{F} =}\frac{\left( 1+\lambda \right) ^{2}\left(
B_{1}-\left\vert B_{2}\right\vert \right) }{2\gamma B_{1}^{2}\left(
1+2\lambda \right) }.$

For each $\mu $ there is a function $f\in \mathcal{S}(\lambda ,\gamma
;\varphi )$ such that equality holds.

\begin{proof}
Using (\ref{p17}) and Lemma \ref{l1}, we obtain\ 
\begin{eqnarray}
\left\vert a_{3}-\mu a_{2}^{2}\right\vert &=&\left\vert \frac{\gamma B_{1}}{%
4\left( 1+2\lambda \right) }\left\{ \left[ c_{2}-\left( \frac{B_{2}}{B_{1}}%
-\left( 1-\mu \right) \frac{4\gamma B_{1}\left( 1+2\lambda \right) }{\left(
1+\lambda \right) ^{2}}\right) c_{1}^{2}\right] -\left[ d_{2}-\frac{B_{2}}{%
B_{1}}d_{1}^{2}\right] \right\} \right\vert  \notag \\
&\leq &\frac{\left\vert \gamma \right\vert B_{1}}{4\left( 1+2\lambda \right) 
}\left\{ 2+\left( \left\vert \frac{B_{2}}{B_{1}}-\left( 1-\mu \right) \frac{%
4\gamma B_{1}\left( 1+2\lambda \right) }{\left( 1+\lambda \right) ^{2}}%
\right\vert +\left\vert \frac{B_{2}}{B_{1}}\right\vert -2\right) \left\vert
c_{1}^{2}\right\vert \right\}  \notag \\
&=&\frac{\gamma B_{1}}{2\left( 1+2\lambda \right) }+\left\{ \frac{\gamma
\left( \left\vert B_{2}\right\vert -B_{1}\right) }{2\left( 1+2\lambda
\right) }+\left\vert \mu -1\right\vert \frac{\gamma ^{2}B_{1}^{2}}{\left(
1+\lambda \right) ^{2}}\right\} \left\vert c_{1}^{2}\right\vert .
\label{p21}
\end{eqnarray}%
Now, the proof will be presented in two cases by considering $B_{1}$, $%
B_{2}\in 
\mathbb{R}
$ and$\ B_{1}>0.$

Firstly, we want to consider the case with $\left\vert B_{2}\right\vert \geq
B_{1}.$ Several possible cases need to further analyze.
\begin{itemize}
\item[Case 1]:  If $\mu \leq 1$, using (\ref{p21}) and Lemma \ref{l1}, we obtain\ 
\begin{eqnarray*}
\left\vert a_{3}-\mu a_{2}^{2}\right\vert &\leq &\frac{\gamma B_{1}}{2\left(
1+2\lambda \right) }+\left\{ \frac{\gamma \left( \left\vert B_{2}\right\vert
-B_{1}\right) }{2\left( 1+2\lambda \right) }+\left( 1-\mu \right) \frac{%
\gamma ^{2}B_{1}^{2}}{\left( 1+\lambda \right) ^{2}}\right\} \left\vert
c_{1}^{2}\right\vert \\
&\leq &\frac{\gamma B_{1}}{2\left( 1+2\lambda \right) }+\left\{ \frac{\gamma
\left( \left\vert B_{2}\right\vert -B_{1}\right) }{2\left( 1+2\lambda
\right) }+\left( 1-\mu \right) \frac{\gamma ^{2}B_{1}^{2}}{\left( 1+\lambda
\right) ^{2}}\right\} \\
&=&\frac{\gamma B_{1}}{2\left( 1+2\lambda \right) }+\frac{\gamma \left\vert
B_{2}\right\vert }{2\left( 1+2\lambda \right) }-\frac{\gamma B_{1}}{2\left(
1+2\lambda \right) }+\frac{\gamma ^{2}B_{1}^{2}}{\left( 1+\lambda \right)
^{2}}-\mu \frac{\gamma ^{2}B_{1}^{2}}{\left( 1+\lambda \right) ^{2}} \\
&=&\frac{\gamma \left\vert B_{2}\right\vert }{2\left( 1+2\lambda \right) }%
-\left( \mu -1\right) \frac{\gamma ^{2}B_{1}^{2}}{\left( 1+\lambda \right)
^{2}}.
\end{eqnarray*}%
\item[Case 2]: \ If $\mu >1$, again using (\ref{p21}) and Lemma \ref{l1}, we obtain%
\begin{eqnarray*}
\left\vert a_{3}-\mu a_{2}^{2}\right\vert &\leq &\frac{\gamma B_{1}}{2\left(
1+2\lambda \right) }+\left\{ \frac{\gamma \left( \left\vert B_{2}\right\vert
-B_{1}\right) }{2\left( 1+2\lambda \right) }+\left( \mu -1\right) \frac{%
\gamma ^{2}B_{1}^{2}}{\left( 1+\lambda \right) ^{2}}\right\} \left\vert
c_{1}^{2}\right\vert \\
&\leq &\frac{\gamma B_{1}}{2\left( 1+2\lambda \right) }+\left\{ \frac{\gamma
\left( \left\vert B_{2}\right\vert -B_{1}\right) }{2\left( 1+2\lambda
\right) }+\left( \mu -1\right) \frac{\gamma ^{2}B_{1}^{2}}{\left( 1+\lambda
\right) ^{2}}\right\} \\
&=&\frac{\gamma B_{1}}{2\left( 1+2\lambda \right) }+\frac{\gamma \left\vert
B_{2}\right\vert }{2\left( 1+2\lambda \right) }-\frac{\gamma B_{1}}{2\left(
1+2\lambda \right) }-\frac{\gamma ^{2}B_{1}^{2}}{\left( 1+\lambda \right)
^{2}}+\mu \frac{\gamma ^{2}B_{1}^{2}}{\left( 1+\lambda \right) ^{2}} \\
&=&\frac{\gamma \left\vert B_{2}\right\vert }{2\left( 1+2\lambda \right) }%
+\left( \mu -1\right) \frac{\gamma ^{2}B_{1}^{2}}{\left( 1+\lambda \right)
^{2}}\cdot
\end{eqnarray*}\end{itemize}
Finally, we want to consider the case with $\left\vert B_{2}\right\vert
<B_{1}.$ By a similar way, several possible cases need to further analyze.
\begin{itemize}
\item[(i)] \ Let $\mu \leq 1-{\normalsize \mathcal{F} },$ using (\ref{p21}) and
Lemma \ref{l1}, we have%
\begin{eqnarray*}
\left\vert a_{3}-\mu a_{2}^{2}\right\vert &\leq &\frac{\gamma B_{1}}{2\left(
1+2\lambda \right) }+\left\{ \frac{\gamma \left( \left\vert B_{2}\right\vert
-B_{1}\right) }{2\left( 1+2\lambda \right) }+\left( 1-\mu \right) \frac{%
\gamma ^{2}B_{1}^{2}}{\left( 1+\lambda \right) ^{2}}\right\} \left\vert
c_{1}^{2}\right\vert \\
&\leq &\frac{\gamma B_{1}}{2\left( 1+2\lambda \right) }+\left\{ \frac{\gamma
\left( \left\vert B_{2}\right\vert -B_{1}\right) }{2\left( 1+2\lambda
\right) }+\left( 1-\mu \right) \frac{\gamma ^{2}B_{1}^{2}}{\left( 1+\lambda
\right) ^{2}}\right\} \\
&=&\frac{\gamma \left\vert B_{2}\right\vert }{2\left( 1+2\lambda \right) }%
-\left( \mu -1\right) \frac{\gamma ^{2}B_{1}^{2}}{\left( 1+\lambda \right)
^{2}}.
\end{eqnarray*}%
\item[(ii)] \ Let $1-{\normalsize \mathcal{F} <}\mu \leq 1,$ using (\ref{p21}) and
Lemma \ref{l1}, we obtain%
\begin{eqnarray*}
\left\vert a_{3}-\mu a_{2}^{2}\right\vert &\leq &\frac{\gamma B_{1}}{2\left(
1+2\lambda \right) }+\left\{ \frac{\gamma \left( \left\vert B_{2}\right\vert
-B_{1}\right) }{2\left( 1+2\lambda \right) }+\left( 1-\mu \right) \frac{%
\gamma ^{2}B_{1}^{2}}{\left( 1+\lambda \right) ^{2}}\right\} \left\vert
c_{1}^{2}\right\vert \\
&\leq &\frac{\gamma B_{1}}{2\left( 1+2\lambda \right) }.
\end{eqnarray*}%
\item[(iii)] \ Let $1<\mu <1+{\normalsize \mathcal{F} },$ using (\ref{p21}) and
Lemma \ref{l1}, we obtain%
\begin{eqnarray*}
\left\vert a_{3}-\mu a_{2}^{2}\right\vert &\leq &\frac{\gamma B_{1}}{2\left(
1+2\lambda \right) }+\left\{ \frac{\gamma \left( \left\vert B_{2}\right\vert
-B_{1}\right) }{2\left( 1+2\lambda \right) }+\left( \mu -1\right) \frac{%
\gamma ^{2}B_{1}^{2}}{\left( 1+\lambda \right) ^{2}}\right\} \left\vert
c_{1}^{2}\right\vert \\
&\leq &\frac{\gamma B_{1}}{2\left( 1+2\lambda \right) }.
\end{eqnarray*}%
\item[(iv)] \ Let $\mu \geq 1+{\normalsize \mathcal{F} },$ using (\ref{p21}) and
Lemma \ref{l1}, we have%
\begin{eqnarray*}
\left\vert a_{3}-\mu a_{2}^{2}\right\vert &\leq &\frac{\gamma B_{1}}{2\left(
1+2\lambda \right) }+\left\{ \frac{\gamma \left( \left\vert B_{2}\right\vert
-B_{1}\right) }{2\left( 1+2\lambda \right) }+\left( \mu -1\right) \frac{%
\gamma ^{2}B_{1}^{2}}{\left( 1+\lambda \right) ^{2}}\right\} \left\vert
c_{1}^{2}\right\vert \\
&\leq &\frac{\gamma B_{1}}{2\left( 1+2\lambda \right) }+\left\{ \frac{\gamma
\left( \left\vert B_{2}\right\vert -B_{1}\right) }{2\left( 1+2\lambda
\right) }+\left( \mu -1\right) \frac{\gamma ^{2}B_{1}^{2}}{\left( 1+\lambda
\right) ^{2}}\right\} \\
&=&\frac{\gamma B_{1}}{2\left( 1+2\lambda \right) }+\frac{\gamma \left\vert
B_{2}\right\vert }{2\left( 1+2\lambda \right) }-\frac{\gamma B_{1}}{2\left(
1+2\lambda \right) }-\frac{\gamma ^{2}B_{1}^{2}}{\left( 1+\lambda \right)
^{2}}+\mu \frac{\gamma ^{2}B_{1}^{2}}{\left( 1+\lambda \right) ^{2}} \\
&=&\frac{\gamma \left\vert B_{2}\right\vert }{2\left( 1+2\lambda \right) }%
+\left( \mu -1\right) \frac{\gamma ^{2}B_{1}^{2}}{\left( 1+\lambda \right)
^{2}}.
\end{eqnarray*}
\end{itemize}
Thus the proof is completed.
\end{proof}

Finally, we consider the case, when $\gamma $ nonzero complex number and $%
\mu \in 
\mathbb{C}
.$ Then we get:
\end{theorem}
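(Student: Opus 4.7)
The plan is to recycle the key inequality obtained in the proof of Theorem \ref{T1} and then convert the question into maximizing an affine function on $[0,1]$. Starting from inequality (\ref{p17}), which bounds $|a_3-\mu a_2^2|$ in terms of $|c_1|^2$ and two absolute values, a single application of the triangle inequality $|\alpha-\beta|\le|\alpha|+|\beta|$ to the first absolute value (with $\alpha=B_2/B_1$ and $\beta=(1-\mu)\cdot 4\gamma B_1(1+2\lambda)/(1+\lambda)^2$) yields, under the hypotheses $\gamma>0$, $\mu\in\mathbb{R}$, $B_1,B_2\in\mathbb{R}$, the clean estimate
\begin{equation*}
|a_3-\mu a_2^2|\le \frac{\gamma B_1}{2(1+2\lambda)}+\left\{\frac{\gamma(|B_2|-B_1)}{2(1+2\lambda)}+|\mu-1|\frac{\gamma^2 B_1^2}{(1+\lambda)^2}\right\}|c_1|^2 .
\end{equation*}

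The right side is affine in $|c_1|^2\in[0,1]$, so its maximum is either the constant $\frac{\gamma B_1}{2(1+2\lambda)}$ at $|c_1|=0$ or the value at $|c_1|=1$, according to the sign of the coefficient
\begin{equation*}
A(\mu):=\frac{\gamma(|B_2|-B_1)}{2(1+2\lambda)}+|\mu-1|\frac{\gamma^2 B_1^2}{(1+\lambda)^2}.
\end{equation*}
A direct computation shows that $A(\mu)\ge 0$ iff $|\mu-1|\ge\mathcal{F}$, with $\mathcal{F}$ as defined in the theorem. This single threshold is the pivot of the whole case analysis.

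If $|B_2|\ge B_1$ then $\mathcal{F}\le 0$, hence $A(\mu)\ge 0$ for every real $\mu$; I would take $|c_1|^2=1$ and resolve $|\mu-1|=\pm(\mu-1)$ by splitting into $\mu\le 1$ and $\mu>1$, which after the cancellation $B_1+(|B_2|-B_1)=|B_2|$ yields the two formulas of part (1). If $|B_2|<B_1$ then $\mathcal{F}>0$ and three regimes appear: for $\mu\le 1-\mathcal{F}$ and for $\mu\ge 1+\mathcal{F}$ the coefficient $A(\mu)$ is nonnegative, so I again use $|c_1|^2=1$ and unfold $|\mu-1|$ according to the sign of $\mu-1$, producing the first and third branches of part (2); for $1-\mathcal{F}<\mu<1+\mathcal{F}$ the coefficient $A(\mu)$ is negative, so the maximum is attained at $|c_1|=0$ and the bound collapses to $\frac{\gamma B_1}{2(1+2\lambda)}$, which is the middle branch.

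Nothing in this plan looks genuinely hard; the only delicate point is bookkeeping the signs of $\mu-1$ and $|B_2|-B_1$ consistently in the four algebraic simplifications. For the sharpness assertion, I would exhibit extremal $u$ (and the corresponding $v$ forced by the inverse relation) in Lemma \ref{l1}: take $u(z)=z$, so $(c_1,c_2)=(1,0)$, in the regimes where the maximum is reached at $|c_1|^2=1$, and $u(z)=z^2$, so $(c_1,c_2)=(0,1)$, in the middle regime of part (2); pulling these back through (\ref{p13}) and (\ref{p10}) produces $f\in\mathcal{S}(\lambda,\gamma;\varphi)$ with the claimed equalities.
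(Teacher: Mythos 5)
Your proposal is correct and follows essentially the same route as the paper: both start from the bound (\ref{p17}), apply the triangle inequality to reach the affine-in-$|c_1|^2$ estimate (\ref{p21}), and then split into cases according to the sign of the coefficient of $|c_1|^2$, which is exactly the condition $|\mu-1|\ge\mathcal{F}$. Your formulation via the single threshold $|\mu-1|\ge\mathcal{F}$ is a slightly tidier packaging of the paper's six subcases, but the underlying argument is identical.
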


\begin{theorem}
\label{T3} \textit{Let }$\gamma $ be a nonzero complex number and let both
functions $f$ of the form (\ref{1}) and its inverse maps $g=f^{-1}$ are in $%
\mathcal{S}(\lambda ,\gamma ;\varphi ).$ Then for $\mu \in 
\mathbb{R}
$ we have \textit{\ }
\begin{itemize}

\item[(1)] \ If $\ \frac{\left( 1+\left\vert \sin \theta \right\vert \right)
\left\vert B_{2}\right\vert }{2B_{1}}\geq 1,$ we have 
\begin{equation*}
\left\vert a_{3}-\mu a_{2}^{2}\right\vert \leq \left\{ 
\begin{array}{c}
\frac{\left\vert \gamma \right\vert ^{2}B_{1}^{2}}{\left( 1+\lambda \right)
^{2}}\left( 1-\mu -\Re \left( k_{1}\right) \right) +\frac{\left\vert \gamma
\right\vert \left\vert B_{2}\right\vert \left( 1+\left\vert \sin \theta
\right\vert \right) }{4\left( 1+2\lambda \right) }\text{\ \ \ \ \ if \ }\mu
\leq 1-\Re \left( k_{1}\right) , \\ 
\frac{\left\vert \gamma \right\vert \left\vert B_{2}\right\vert \left(
1+\left\vert \sin \theta \right\vert \right) }{4\left( 1+2\lambda \right) }-%
\frac{\left\vert \gamma \right\vert ^{2}B_{1}^{2}}{\left( 1+\lambda \right)
^{2}}\left( 1-\mu -\Re \left( k_{1}\right) \right) \text{\ \ \ \ \ if \ }\mu
>1-\Re \left( k_{1}\right) .%
\end{array}%
\right.
\end{equation*}%
\item[(2)] \ If $\ \frac{\left( 1+\left\vert \sin \theta \right\vert \right)
\left\vert B_{2}\right\vert }{2B_{1}}<1,$ we obtain 
\begin{equation*}
\left\vert a_{3}-\mu a_{2}^{2}\right\vert \leq \left\{ 
\begin{array}{c}
\frac{\left\vert \gamma \right\vert ^{2}B_{1}^{2}}{\left( 1+\lambda \right)
^{2}}\left( 1-\mu -\Re \left( k_{1}\right) \right) +\frac{\left\vert \gamma
\right\vert \left\vert B_{2}\right\vert \left( 1+\left\vert \sin \theta
\right\vert \right) }{4\left( 1+2\lambda \right) }\text{\ \ \ \ \ \ \ \ \ \
\ \ \ \ \ \ \ \ \ \ \ \ \ \ \ \ \ \ \ \ if \ }\mu \leq 1-\Re \left(
k_{1}\right) +\mathcal{N}, \\ 
\frac{\left\vert \gamma \right\vert B_{1}}{2\left( 1+2\lambda \right) }\text{%
\ \ \ \ \ \ \ \ \ \ \ \ \ \ \ \ \ \ \ \ \ \ \ \ \ \ \ \ \ \ \ \ \ \ \ \ \ \
\ \ \ \ \ \ \ \ \ \ \ \ \ if \ }1-\Re \left( k_{1}\right) +\mathcal{N}%
{\normalsize <}\mu <1-\Re \left( k_{1}\right) -\mathcal{N}\text{\ }, \\ 
\frac{\left\vert \gamma \right\vert \left\vert B_{2}\right\vert \left(
1+\left\vert \sin \theta \right\vert \right) }{4\left( 1+2\lambda \right) }-%
\frac{\left\vert \gamma \right\vert ^{2}B_{1}^{2}}{\left( 1+\lambda \right)
^{2}}\left( 1-\mu -\Re \left( k_{1}\right) \right) \text{\ \ \ \ \ \ \ \ \ \
\ \ \ \ \ \ \ \ \ \ \ \ \ \ \ \ \ \ \ \ if \ }\mu \geq 1-\Re \left(
k_{1}\right) -\mathcal{N},%
\end{array}%
\right.
\end{equation*}%
where $k_{1}=\frac{B_{2}\left( 1+\lambda \right) ^{2}e^{i\theta }}{%
4B_{1}^{2}\left\vert \gamma \right\vert \left( 1+2\lambda \right) },$ $l_{1=}%
\frac{\left( \left\vert B_{2}\right\vert -2B_{1}\right) \left( 1+\lambda
\right) ^{2}}{4B_{1}^{2}\left\vert \gamma \right\vert \left( 1+2\lambda
\right) },$ $\left\vert \gamma \right\vert =\gamma e^{i\theta }\ $and $%
\mathcal{N=}\frac{\left( 1+\lambda \right) ^{2}\left[ \left\vert
B_{2}\right\vert \left( 1+\left\vert \sin \theta \right\vert \right) -2B_{1}%
\right] }{4B_{1}^{2}\left\vert \gamma \right\vert \left( 1+2\lambda \right) }%
.$
\end{itemize}
For each $\mu $ there is a function in $\mathcal{S}(\lambda ,\gamma ;\varphi
)$ such that the equality holds.
\end{theorem}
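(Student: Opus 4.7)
The plan is to continue from the identity derived in the proof of Theorem~\ref{T1}, namely
\[
a_3-\mu a_2^2 \;=\; \frac{\gamma B_1}{4(1+2\lambda)}\Bigl\{\bigl[c_2-s\,c_1^2\bigr]-\bigl[d_2-t\,d_1^2\bigr]\Bigr\},
\]
with $s=\frac{B_2}{B_1}-(1-\mu)\frac{4\gamma B_1(1+2\lambda)}{(1+\lambda)^2}$ and $t=\frac{B_2}{B_1}$. Since $B_1,B_2,\mu$ are real while $\gamma$ is genuinely complex, $t$ is real but $s$ is not. Applying Lemma~\ref{l1} and the triangle inequality, exactly as in Theorems~\ref{T1} and~\ref{T2}, yields
\[
\bigl|a_3-\mu a_2^2\bigr|\;\le\;\frac{|\gamma|B_1}{4(1+2\lambda)}\bigl[2+(|s|+|t|-2)|c_1|^2\bigr].
\]

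The crucial new ingredient is a sharp estimate of $|s|$ that exploits the argument of $\gamma$. Writing $\gamma=|\gamma|e^{i\theta}$ and setting $\alpha=\frac{4|\gamma|B_1(1+2\lambda)}{(1+\lambda)^2}$, one has $s=\frac{B_2}{B_1}-\alpha(1-\mu)e^{i\theta}$ and, directly from the definition of $k_1$, the identity $\alpha\,\Re(k_1)=\frac{B_2\cos\theta}{B_1}$. I plan to establish the elementary but non-standard inequality
\[
\Bigl|\tfrac{B_2}{B_1}-\alpha(1-\mu)e^{i\theta}\Bigr|\;\le\;\Bigl|\alpha(1-\mu)-\tfrac{B_2\cos\theta}{B_1}\Bigr|+\tfrac{|B_2|}{B_1}\,|\sin\theta|,
\]
which, after squaring both sides, reduces to the manifestly true statement $2\bigl|\alpha(1-\mu)-\tfrac{B_2\cos\theta}{B_1}\bigr|\cdot\tfrac{|B_2|}{B_1}\,|\sin\theta|\ge0$. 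Combined with $|t|=|B_2|/B_1$ and $\tfrac{|\gamma|B_1}{4(1+2\lambda)}\alpha=\tfrac{|\gamma|^2B_1^2}{(1+\lambda)^2}$, this delivers
\[
\bigl|a_3-\mu a_2^2\bigr|\;\le\;\frac{|\gamma|B_1}{2(1+2\lambda)}+\Bigl\{\frac{|\gamma|^2B_1^2}{(1+\lambda)^2}\,|1-\mu-\Re(k_1)|+\frac{|\gamma|\bigl(|B_2|(1+|\sin\theta|)-2B_1\bigr)}{4(1+2\lambda)}\Bigr\}|c_1|^2.
\]

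The rest of the argument follows the case structure of Theorem~\ref{T2}. A short computation shows that the coefficient of $|c_1|^2$ above is nonnegative exactly when $|1-\mu-\Re(k_1)|\ge-\mathcal N$. If $|B_2|(1+|\sin\theta|)\ge2B_1$ (so $\mathcal N\ge0$), this is automatic, the supremum over $|c_1|\le1$ is attained at $|c_1|=1$, and removing the absolute value according to the sign of $1-\mu-\Re(k_1)$ produces the two branches of case~(1). If instead $|B_2|(1+|\sin\theta|)<2B_1$ (so $-\mathcal N>0$), the coefficient is negative precisely on the interval $1-\Re(k_1)+\mathcal N<\mu<1-\Re(k_1)-\mathcal N$, where the supremum is attained at $|c_1|=0$ with value $\frac{|\gamma|B_1}{2(1+2\lambda)}$; outside that interval the previous analysis applies and gives the remaining two subcases.

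The hardest step is spotting the inequality $|a-be^{i\theta}|\le|b-a\cos\theta|+|a||\sin\theta|$ for real $a,b$: it is precisely what allows $\Re(k_1)$ and the factor $1+|\sin\theta|$ to enter the bound, whereas the naive estimate $|s|\le|B_2|/B_1+\alpha|1-\mu|$ would merely reproduce a Theorem~\ref{T1}-type conclusion. Once this inequality is in hand, the remaining work is careful case bookkeeping keyed by the thresholds $1-\Re(k_1)\pm\mathcal N$ for $\mu$ and by the sign of $|B_2|(1+|\sin\theta|)-2B_1$. For sharpness, in each subcase one produces an extremal $f\in\mathcal S(\lambda,\gamma;\varphi)$ by choosing $c_1,c_2$ (with $d_1=-c_1$ and $d_2$ determined accordingly) so that Lemma~\ref{l1} and the key inequality simultaneously become equalities, mirroring the extremal construction in Theorem~\ref{T2}.
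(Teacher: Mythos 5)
Your proposal is correct and follows essentially the same route as the paper: your ``non-standard inequality'' $\bigl|\tfrac{B_2}{B_1}-\alpha(1-\mu)e^{i\theta}\bigr|\le\bigl|\alpha(1-\mu)-\tfrac{B_2\cos\theta}{B_1}\bigr|+\tfrac{|B_2|}{B_1}|\sin\theta|$ is exactly the paper's triangle-inequality split $|1-\mu-k_1|\le|1-\mu-\Re(k_1)|+|\Im(k_1)|$ rescaled by $\alpha$, and your case bookkeeping via the sign of $|B_2|(1+|\sin\theta|)-2B_1$ and the thresholds $1-\Re(k_1)\pm\mathcal N$ reproduces the paper's inequality (\ref{p32}) and its four subcases. (Like the paper, you assert sharpness without an explicit extremal construction, so nothing is lost there either.)
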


\begin{proof}
\noindent Suppose $f\left( z\right) =z+\sum_{k=2}^{\infty }a_{k}z^{k}\in 
\mathcal{S}(\lambda ,\gamma ;\varphi ),$ using (\ref{p17}) and Lemma \ref{l1}%
, then we obtain%
\begin{eqnarray}
\left\vert a_{3}-\mu a_{2}^{2}\right\vert &\leq &\frac{\left\vert \gamma
\right\vert B_{1}}{4\left( 1+2\lambda \right) }\left\{ 2+\left( \left\vert 
\frac{B_{2}}{B_{1}}-\left( 1-\mu \right) \frac{4\gamma B_{1}\left(
1+2\lambda \right) }{\left( 1+\lambda \right) ^{2}}\right\vert +\left\vert 
\frac{B_{2}}{B_{1}}\right\vert -2\right) \left\vert c_{1}^{2}\right\vert
\right\}  \notag \\
&=&\frac{\left\vert \gamma \right\vert B_{1}}{2\left( 1+2\lambda \right) }+%
\frac{\left\vert \gamma \right\vert ^{2}B_{1}^{2}}{\left( 1+\lambda \right)
^{2}}\left[ \left\vert \left( 1-\mu \right) -\frac{B_{2}\left( 1+\lambda
\right) ^{2}}{4B_{1}^{2}\gamma \left( 1+2\lambda \right) }\right\vert +\frac{%
\left( \left\vert B_{2}\right\vert -2B_{1}\right) \left( 1+\lambda \right)
^{2}}{4B_{1}^{2}\left\vert \gamma \right\vert \left( 1+2\lambda \right) }%
\right] \left\vert c_{1}^{2}\right\vert .  \label{p31}
\end{eqnarray}%
Taking $\left\vert \gamma \right\vert =\gamma e^{i\theta }$, $k_{1}=\frac{%
B_{2}\left( 1+\lambda \right) ^{2}e^{i\theta }}{4B_{1}^{2}\left\vert \gamma
\right\vert \left( 1+2\lambda \right) }$ and $l_{1}=\frac{\left( \left\vert
B_{2}\right\vert -2B_{1}\right) \left( 1+\lambda \right) ^{2}}{%
4B_{1}^{2}\left\vert \gamma \right\vert \left( 1+2\lambda \right) }$, a
direct calculation with (\ref{p31}) shows that 
\begin{eqnarray}
\left\vert a_{3}-\mu a_{2}^{2}\right\vert &\leq &\frac{\left\vert \gamma
\right\vert B_{1}}{2\left( 1+2\lambda \right) }+\frac{\left\vert \gamma
\right\vert ^{2}B_{1}^{2}}{\left( 1+\lambda \right) ^{2}}\left( \left\vert
1-\mu -k_{1}\right\vert +l_{1}\right) \left\vert c_{1}^{2}\right\vert  \notag
\\
&\leq &\frac{\left\vert \gamma \right\vert B_{1}}{2\left( 1+2\lambda \right) 
}+\frac{\left\vert \gamma \right\vert ^{2}B_{1}^{2}}{\left( 1+\lambda
\right) ^{2}}\left( \left\vert 1-\mu -\Re \left( k_{1}\right) -i\left( \mathbf{%
Im}(k_{1}\right) \right\vert +l_{1}\right) \left\vert c_{1}^{2}\right\vert 
\notag \\
&\leq &\frac{\left\vert \gamma \right\vert B_{1}}{2\left( 1+2\lambda \right) 
}+\frac{\left\vert \gamma \right\vert ^{2}B_{1}^{2}}{\left( 1+\lambda
\right) ^{2}}\left( \left\vert 1-\mu -\Re \left( k_{1}\right) \right\vert +%
\frac{\left\vert B_{2}\right\vert \left( 1+\lambda \right) ^{2}\left\vert
\sin \theta \right\vert }{4B_{1}^{2}\left\vert \gamma \right\vert \left(
1+2\lambda \right) }+l_{1}\right) \left\vert c_{1}^{2}\right\vert  \notag \\
&=&\frac{\left\vert \gamma \right\vert B_{1}}{2\left( 1+2\lambda \right) }+%
\frac{\left\vert \gamma \right\vert ^{2}B_{1}^{2}}{\left( 1+\lambda \right)
^{2}}\left( \left\vert 1-\mu -\Re \left( k_{1}\right) \right\vert +\frac{%
\left\vert B_{2}\right\vert \left\vert \sin \theta \right\vert }{\left\vert
B_{2}\right\vert -2B_{1}}l_{1}+l_{1}\right) \left\vert c_{1}^{2}\right\vert 
\notag \\
&=&\frac{\left\vert \gamma \right\vert B_{1}}{2\left( 1+2\lambda \right) }+%
\frac{\left\vert \gamma \right\vert ^{2}B_{1}^{2}}{\left( 1+\lambda \right)
^{2}}\left[ \left\vert 1-\mu -\Re \left( k_{1}\right) \right\vert
+l_{1}\left( \frac{\left\vert B_{2}\right\vert \left\vert \sin \theta
\right\vert }{\left\vert B_{2}\right\vert -2B_{1}}+1\right) \right]
\left\vert c_{1}^{2}\right\vert  \notag \\
&=&\frac{\left\vert \gamma \right\vert B_{1}}{2\left( 1+2\lambda \right) }+%
\left[ \frac{\left\vert \gamma \right\vert ^{2}B_{1}^{2}}{\left( 1+\lambda
\right) ^{2}}\cdot \left\vert 1-\mu -\Re \left( k_{1}\right) \right\vert +%
\frac{\left\vert \gamma \right\vert \left[ \left\vert B_{2}\right\vert
\left( 1+\left\vert \sin \theta \right\vert \right) -2B_{1}\right] }{4\left(
1+2\lambda \right) }\right] \left\vert c_{1}^{2}\right\vert .  \label{p32}
\end{eqnarray}%
Now, we will make some discussions for several different cases by
considering $B_{1}$, $B_{2}\in 
\mathbb{R}
$ and$\ B_{1}>0.$

Firstly, we want to consider the case with $\frac{2B_{1}}{\left\vert
B_{2}\right\vert }-\left\vert \sin \theta \right\vert <1.$ Several possible
cases need to further analyze.
\begin{itemize}

\item[Case 1]: Let $\mu \leq 1-\Re \left( k_{1}\right) $. Then from (\ref{p32}) and
Lemma \ref{l1}, we obtain%
\begin{eqnarray*}
\left\vert a_{3}-\mu a_{2}^{2}\right\vert &\leq &\frac{\left\vert \gamma
\right\vert B_{1}}{2\left( 1+2\lambda \right) }+\left[ \frac{\left\vert
\gamma \right\vert ^{2}B_{1}^{2}}{\left( 1+\lambda \right) ^{2}}\cdot
\left\vert 1-\mu -\Re \left( k_{1}\right) \right\vert +\frac{\left\vert
\gamma \right\vert \left[ \left\vert B_{2}\right\vert \left( 1+\left\vert
\sin \theta \right\vert \right) -2B_{1}\right] }{4\left( 1+2\lambda \right) }%
\right] \left\vert c_{1}^{2}\right\vert \\
&\leq &\frac{\left\vert \gamma \right\vert B_{1}}{2\left( 1+2\lambda \right) 
}+\frac{\left\vert \gamma \right\vert ^{2}B_{1}^{2}}{\left( 1+\lambda
\right) ^{2}}\cdot \left( 1-\mu -\Re \left( k_{1}\right) \right) +\frac{%
\left\vert \gamma \right\vert \left[ \left\vert B_{2}\right\vert \left(
1+\left\vert \sin \theta \right\vert \right) -2B_{1}\right] }{4\left(
1+2\lambda \right) } \\
&=&\frac{\left\vert \gamma \right\vert B_{1}}{2\left( 1+2\lambda \right) }+%
\frac{\left\vert \gamma \right\vert ^{2}B_{1}^{2}}{\left( 1+\lambda \right)
^{2}}\cdot \left( 1-\mu -\Re \left( k_{1}\right) \right) +\frac{\left\vert
\gamma \right\vert \left\vert B_{2}\right\vert \left( 1+\left\vert \sin
\theta \right\vert \right) }{4\left( 1+2\lambda \right) }-\frac{\left\vert
\gamma \right\vert B_{1}}{2\left( 1+2\lambda \right) } \\
&=&\frac{\left\vert \gamma \right\vert ^{2}B_{1}^{2}}{\left( 1+\lambda
\right) ^{2}}\cdot \left( 1-\mu -\Re \left( k_{1}\right) \right) +\frac{%
\left\vert \gamma \right\vert \left\vert B_{2}\right\vert \left(
1+\left\vert \sin \theta \right\vert \right) }{4\left( 1+2\lambda \right) }.
\end{eqnarray*}%
\item[Case 2]: Let $\mu >1-\Re \left( k_{1}\right) $, then from (\ref{p32}) and
Lemma \ref{l1}, we yield%
\begin{eqnarray*}
\left\vert a_{3}-\mu a_{2}^{2}\right\vert &\leq &\frac{\left\vert \gamma
\right\vert B_{1}}{2\left( 1+2\lambda \right) }+\left[ \frac{\left\vert
\gamma \right\vert ^{2}B_{1}^{2}}{\left( 1+\lambda \right) ^{2}}\cdot
\left\vert 1-\mu -\Re \left( k_{1}\right) \right\vert +\frac{\left\vert
\gamma \right\vert \left[ \left\vert B_{2}\right\vert \left( 1+\left\vert
\sin \theta \right\vert \right) -2B_{1}\right] }{4\left( 1+2\lambda \right) }%
\right] \left\vert c_{1}^{2}\right\vert \\
&\leq &\frac{\left\vert \gamma \right\vert B_{1}}{2\left( 1+2\lambda \right) 
}+\frac{\left\vert \gamma \right\vert ^{2}B_{1}^{2}}{\left( 1+\lambda
\right) ^{2}}\cdot \left( \mu +\Re \left( k_{1}\right) -1\right) +\frac{%
\left\vert \gamma \right\vert \left[ \left\vert B_{2}\right\vert \left(
1+\left\vert \sin \theta \right\vert \right) -2B_{1}\right] }{4\left(
1+2\lambda \right) } \\
&=&\frac{\left\vert \gamma \right\vert B_{1}}{2\left( 1+2\lambda \right) }-%
\frac{\left\vert \gamma \right\vert ^{2}B_{1}^{2}}{\left( 1+\lambda \right)
^{2}}\cdot \left( 1-\mu -\Re \left( k_{1}\right) \right) +\frac{\left\vert
\gamma \right\vert \left\vert B_{2}\right\vert \left( 1+\left\vert \sin
\theta \right\vert \right) }{4\left( 1+2\lambda \right) }-\frac{\left\vert
\gamma \right\vert B_{1}}{2\left( 1+2\lambda \right) } \\
&=&\frac{\left\vert \gamma \right\vert \left\vert B_{2}\right\vert \left(
1+\left\vert \sin \theta \right\vert \right) }{4\left( 1+2\lambda \right) }-%
\frac{\left\vert \gamma \right\vert ^{2}B_{1}^{2}}{\left( 1+\lambda \right)
^{2}}\cdot \left( 1-\mu -\Re \left( k_{1}\right) \right) .
\end{eqnarray*}\end{itemize}
Finally, we want to consider the case with $\frac{2B_{1}}{\left\vert
B_{2}\right\vert }-\left\vert \sin \theta \right\vert >1.$ By a similar
approximation, several possible cases need to further analyze.
\begin{itemize}

\item[(i)] \ Let \ $\mu \leq 1-\Re \left( k_{1}\right) +\mathcal{N},$ using (\ref%
{p32}) and Lemma \ref{l1}, we have%
\begin{eqnarray*}
\left\vert a_{3}-\mu a_{2}^{2}\right\vert &\leq &\frac{\left\vert \gamma
\right\vert B_{1}}{2\left( 1+2\lambda \right) }+\left[ \frac{\left\vert
\gamma \right\vert ^{2}B_{1}^{2}}{\left( 1+\lambda \right) ^{2}}\cdot
\left\vert 1-\mu -\Re \left( k_{1}\right) \right\vert +\frac{\left\vert
\gamma \right\vert \left[ \left\vert B_{2}\right\vert \left( 1+\left\vert
\sin \theta \right\vert \right) -2B_{1}\right] }{4\left( 1+2\lambda \right) }%
\right] \left\vert c_{1}^{2}\right\vert \\
&\leq &\frac{\left\vert \gamma \right\vert B_{1}}{2\left( 1+2\lambda \right) 
}+\frac{\left\vert \gamma \right\vert ^{2}B_{1}^{2}}{\left( 1+\lambda
\right) ^{2}}\cdot \left( 1-\mu -\Re \left( k_{1}\right) \right) +\frac{%
\left\vert \gamma \right\vert \left[ \left\vert B_{2}\right\vert \left(
1+\left\vert \sin \theta \right\vert \right) -2B_{1}\right] }{4\left(
1+2\lambda \right) } \\
&=&\frac{\left\vert \gamma \right\vert B_{1}}{2\left( 1+2\lambda \right) }+%
\frac{\left\vert \gamma \right\vert ^{2}B_{1}^{2}}{\left( 1+\lambda \right)
^{2}}\cdot \left( 1-\mu -\Re \left( k_{1}\right) \right) +\frac{\left\vert
\gamma \right\vert \left\vert B_{2}\right\vert \left( 1+\left\vert \sin
\theta \right\vert \right) }{4\left( 1+2\lambda \right) }-\frac{\left\vert
\gamma \right\vert B_{1}}{2\left( 1+2\lambda \right) } \\
&=&\frac{\left\vert \gamma \right\vert ^{2}B_{1}^{2}}{\left( 1+\lambda
\right) ^{2}}\cdot \left( 1-\mu -\Re \left( k_{1}\right) \right) +\frac{%
\left\vert \gamma \right\vert \left\vert B_{2}\right\vert \left(
1+\left\vert \sin \theta \right\vert \right) }{4\left( 1+2\lambda \right) }.
\end{eqnarray*}%
\item[(ii)] \ Let $1-\Re \left( k_{1}\right) +\mathcal{N}{\normalsize <}\mu \leq
1-\Re \left( k_{1}\right) ,$ using (\ref{p32}) and Lemma \ref{l1}, we obtain%
\begin{eqnarray*}
\left\vert a_{3}-\mu a_{2}^{2}\right\vert &\leq &\frac{\left\vert \gamma
\right\vert B_{1}}{2\left( 1+2\lambda \right) }+\left[ \frac{\left\vert
\gamma \right\vert ^{2}B_{1}^{2}}{\left( 1+\lambda \right) ^{2}}\cdot
\left\vert 1-\mu -\Re \left( k_{1}\right) \right\vert +\frac{\left\vert
\gamma \right\vert \left[ \left\vert B_{2}\right\vert \left( 1+\left\vert
\sin \theta \right\vert \right) -2B_{1}\right] }{4\left( 1+2\lambda \right) }%
\right] \left\vert c_{1}^{2}\right\vert \\
&\leq &\frac{\left\vert \gamma \right\vert B_{1}}{2\left( 1+2\lambda \right) 
}.
\end{eqnarray*}%
\item[(iii)] \ Let $1-\Re \left( k_{1}\right) <\mu <1-\Re \left( k_{1}\right) -%
\mathcal{N},$ using (\ref{p32}) and Lemma \ref{l1}, we obtain%
\begin{eqnarray*}
\left\vert a_{3}-\mu a_{2}^{2}\right\vert &\leq &\frac{\left\vert \gamma
\right\vert B_{1}}{2\left( 1+2\lambda \right) }+\left[ \frac{\left\vert
\gamma \right\vert ^{2}B_{1}^{2}}{\left( 1+\lambda \right) ^{2}}\cdot
\left\vert 1-\mu -\Re \left( k_{1}\right) \right\vert +\frac{\left\vert
\gamma \right\vert \left[ \left\vert B_{2}\right\vert \left( 1+\left\vert
\sin \theta \right\vert \right) -2B_{1}\right] }{4\left( 1+2\lambda \right) }%
\right] \left\vert c_{1}^{2}\right\vert \\
&\leq &\frac{\left\vert \gamma \right\vert B_{1}}{2\left( 1+2\lambda \right) 
}.
\end{eqnarray*}%
\item[(iv)] \ Let $\mu \geq 1-\Re \left( k_{1}\right) -\mathcal{N},$ using (\ref%
{p32}) and Lemma \ref{l1}, we have%
\begin{eqnarray*}
\left\vert a_{3}-\mu a_{2}^{2}\right\vert &\leq &\frac{\left\vert \gamma
\right\vert B_{1}}{2\left( 1+2\lambda \right) }+\left[ \frac{\left\vert
\gamma \right\vert ^{2}B_{1}^{2}}{\left( 1+\lambda \right) ^{2}}\cdot
\left\vert 1-\mu -\Re \left( k_{1}\right) \right\vert +\frac{\left\vert
\gamma \right\vert \left[ \left\vert B_{2}\right\vert \left( 1+\left\vert
\sin \theta \right\vert \right) -2B_{1}\right] }{4\left( 1+2\lambda \right) }%
\right] \left\vert c_{1}^{2}\right\vert \\
&\leq &\frac{\left\vert \gamma \right\vert B_{1}}{2\left( 1+2\lambda \right) 
}+\frac{\left\vert \gamma \right\vert ^{2}B_{1}^{2}}{\left( 1+\lambda
\right) ^{2}}\cdot \left( \mu +\Re \left( k_{1}\right) -1\right) +\frac{%
\left\vert \gamma \right\vert \left[ \left\vert B_{2}\right\vert \left(
1+\left\vert \sin \theta \right\vert \right) -2B_{1}\right] }{4\left(
1+2\lambda \right) } \\
&=&\frac{\left\vert \gamma \right\vert B_{1}}{2\left( 1+2\lambda \right) }-%
\frac{\left\vert \gamma \right\vert ^{2}B_{1}^{2}}{\left( 1+\lambda \right)
^{2}}\cdot \left( 1-\mu -\Re \left( k_{1}\right) \right) +\frac{\left\vert
\gamma \right\vert \left\vert B_{2}\right\vert \left( 1+\left\vert \sin
\theta \right\vert \right) }{4\left( 1+2\lambda \right) }-\frac{\left\vert
\gamma \right\vert B_{1}}{2\left( 1+2\lambda \right) } \\
&=&\frac{\left\vert \gamma \right\vert \left\vert B_{2}\right\vert \left(
1+\left\vert \sin \theta \right\vert \right) }{4\left( 1+2\lambda \right) }-%
\frac{\left\vert \gamma \right\vert ^{2}B_{1}^{2}}{\left( 1+\lambda \right)
^{2}}\cdot \left( 1-\mu -\Re \left( k_{1}\right) \right) .
\end{eqnarray*}
\end{itemize}
Thus the proof is completed.
\end{proof}

\begin{corollary}
\textit{Let }$\gamma =1$ and $\lambda =0.$ If both functions $f$ of the form
(\ref{1}) and its inverse maps $g=f^{-1}$ are in $\mathcal{S}\left[ A,B%
\right] ,$ then using Theorem (\ref{T1}), (\ref{T2}) and (\ref{T3}), we
obtain

$\left( 1\right) $ \ For \ $\gamma \in 
\mathbb{C}
\backslash \left\{ 0\right\} $ and $\mu \in 
\mathbb{C}
,$ 
\begin{equation*}
\ \ \left\vert a_{3}-\mu a_{2}^{2}\right\vert \leq \left\{ 
\begin{array}{c}
\frac{A-B}{2}\text{\ \ \ \ \ \ \ \ \ \ \ \ \ \ \ \ \ \ \ \ \ \ \ \ \ \ \ \ \
\ \ \ \ \ \ \ \ \ \ \ \ \ \ \ \ \ \ \ \ \ if }\left\vert B\right\vert
+\left\vert 4\left( 1-\mu \right) \left( A-B\right) -B\right\vert <2, \\ 
\frac{\left( A-B\right) }{4}\left[ \left\vert B\right\vert +\left\vert
4\left( 1-\mu \right) \left( A-B\right) -B\right\vert \right] \text{\ \ \ \
if \ }\left\vert B\right\vert +\left\vert 4\left( 1-\mu \right) \left(
A-B\right) -B\right\vert \geq 2.%
\end{array}%
\right.
\end{equation*}

$\left( 2\right) $ \ For \ $\gamma >0$ and $\mu \in 
\mathbb{R}
,$ \ \ 
\begin{equation*}
\left\vert a_{3}-\mu a_{2}^{2}\right\vert \leq \left\{ 
\begin{array}{c}
\frac{\left\vert B\right\vert \left( A-B\right) }{2}-\left( \mu -1\right) 
\text{ }\left( A-B\right) ^{2}\text{\ \ \ \ \ \ \ \ \ \ \ \ \ \ \ \ \ \ \ \
\ \ \ \ \ if \ }\mu \leq 1-\frac{1-\left\vert B\right\vert }{2\left(
A-B\right) }, \\ 
\frac{A-B}{2}\text{\ \ \ \ \ \ \ \ \ \ \ \ \ \ \ \ \ \ \ \ \ \ \ \ \ \ \ \ \
\ \ \ \ \ \ \ \ \ \ if \ }1-\frac{1-\left\vert B\right\vert }{2\left(
A-B\right) }<\mu <1+\frac{1-\left\vert B\right\vert }{2\left( A-B\right) },
\\ 
\frac{\left\vert B\right\vert \left( A-B\right) }{2}+\left( \mu -1\right) 
\text{ }\left( A-B\right) ^{2}\text{\ \ \ \ \ \ \ \ \ \ \ \ \ \ \ \ \ \ \ \
\ \ \ \ \ if \ }\mu \geq 1+\frac{1-\left\vert B\right\vert }{2\left(
A-B\right) },%
\end{array}%
\right.
\end{equation*}

\ \ \ \ 

$\left( 3\right) $ \ \ For \ $\gamma \in 
\mathbb{C}
\backslash \left\{ 0\right\} $ and $\mu \in 
\mathbb{R}
,$

\begin{equation*}
\left\vert a_{3}-\mu a_{2}^{2}\right\vert \leq \left\{ 
\begin{array}{c}
\left( A-B\right) ^{2}\text{\ }\left( 1-\mu \right) +\frac{\left\vert
B\right\vert \left( A-B\right) \left( 1+\left\vert \sin \theta \right\vert
-\cos \theta \right) }{4}\text{ \ \ \ \ \ \ \ \ \ \ \ \ \ \ \ \ \ \ \ \ \ \
\ \ \ \ \ \ \ \ if }\mu \leq 1+\psi _{1}\left( A,B,\theta \right) , \\ 
\frac{A-B}{2}\text{ \ \ \ \ \ \ \ \ \ \ \ \ \ \ \ \ \ \ \ \ \ \ \ \ \ \ \ \
\ \ \ \ \ \ \ \ \ \ \ \ \ \ \ \ \ \ \ \ \ \ \ \ \ \ \ \ \ \ \ if }1+\psi
_{1}\left( A,B,\theta \right) {\normalsize <}\mu <1-\psi _{2}\left(
A,B,\theta \right) , \\ 
\frac{\left\vert B\right\vert \left( A-B\right) \left( 1+\left\vert \sin
\theta \right\vert +\cos \theta \right) }{4}-\left( A-B\right) ^{2}\left(
1-\mu \right) \text{\ \ \ \ \ \ \ \ \ \ \ \ \ \ \ \ \ \ \ \ \ \ \ \ \ \ \ \
\ \ \ if \ }\mu \geq 1-\psi _{2}\left( A,B,\theta \right) ,%
\end{array}%
\right.
\end{equation*}

where $B_{1}=A-B$, $B_{2}=-B\left( A-B\right) $, $-1\leq B<A\leq 1,$ $\psi
_{1}\left( A,B,\theta \right) =\frac{\left\vert B\right\vert \left(
1+\left\vert \sin \theta \right\vert -\cos \theta \right) -2}{4\left(
A-B\right) }$ and $\psi _{2}\left( A,B,\theta \right) =\frac{\left\vert
B\right\vert \left( 1+\left\vert \sin \theta \right\vert +\cos \theta
\right) -2}{4\left( A-B\right) }.$
\end{corollary}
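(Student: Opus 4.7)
The plan is to obtain this corollary by directly specializing Theorems \ref{T1}, \ref{T2}, \ref{T3} to the Janowski setting, where the subordinating function is $\varphi(z)=(1+Az)/(1+Bz)$. No new coefficient machinery is needed; the work is purely one of substitution and simplification of the auxiliary quantities $\mathcal{L}$, $\mathcal{F}$, $k_1$, $l_1$, $\mathcal{N}$ appearing in the three theorems.

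First I would identify the Taylor coefficients of $\varphi$. Expanding $(1+Az)/(1+Bz)$ as a geometric series yields $\varphi(z)=1+(A-B)z-B(A-B)z^2+B^2(A-B)z^3-\cdots$, so that $B_1=A-B$ and $B_2=-B(A-B)$. In particular $B_2/B_1=-B$ and $|B_2|/B_1=|B|$. Since the class $\mathcal{S}[A,B]$ corresponds to $\lambda=0$ (and the complex-order parameter $\gamma$ that appears in parts (1) and (3) is retained as a free parameter of the ambient Theorems \ref{T1}--\ref{T3}), the factors $1+\lambda$ and $1+2\lambda$ collapse to $1$, which greatly simplifies every fraction.

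Next I would handle the three parts in order. For part (1), I plug these data into Theorem \ref{T1}: the quantity $\mathcal{L}$ becomes $|-B+4(1-\mu)(A-B)|+|B|$, so the first branch $B_1|\gamma|/(2(1+2\lambda))$ reduces to $(A-B)/2$ and the second branch $B_1|\gamma|\mathcal{L}/(4(1+2\lambda))$ reduces to $\tfrac{A-B}{4}\bigl[|B|+|4(1-\mu)(A-B)-B|\bigr]$, which is precisely the stated dichotomy. For part (2), I apply Theorem \ref{T2} (so $\gamma>0$, $\mu\in\mathbb{R}$): the threshold $\mathcal{F}$ becomes $(1-|B|)/(2(A-B))$, and the three cases are obtained by substituting $|B_2|=|B|(A-B)$ and $B_1^2/(1+\lambda)^2=(A-B)^2$ into each of the three branches of Theorem \ref{T2}. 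For part (3), I use Theorem \ref{T3}: writing $\gamma=|\gamma|e^{i\theta}$, the quantities $k_1=-Be^{i\theta}/(4(A-B))$, $\Re(k_1)=-B\cos\theta/(4(A-B))$, and $\mathcal{N}=[|B|(1+|\sin\theta|)-2]/(4(A-B))$ are all explicit; inserting them into the three branches of Theorem \ref{T3} produces the threshold functions $\psi_1(A,B,\theta)$ and $\psi_2(A,B,\theta)$ stated in the corollary.

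The main obstacle is essentially bookkeeping: one must verify that the side conditions $\mathcal{L}\lessgtr 2$, $|B_2|\lessgtr B_1$, $(1+|\sin\theta|)|B_2|/(2B_1)\lessgtr 1$, and $\mu\lessgtr 1\pm\mathcal{F}$ of the parent theorems translate, under the substitutions $B_1=A-B$, $B_2=-B(A-B)$, $\lambda=0$, into the inequalities $|B|+|4(1-\mu)(A-B)-B|\lessgtr 2$, $|B|\lessgtr 1$, and $\mu\lessgtr 1\pm(1-|B|)/(2(A-B))$ (respectively $\mu\lessgtr 1+\psi_i(A,B,\theta)$) appearing in the corollary. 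Once the case conditions are matched, each numerical bound follows by a line of algebra. No analytical difficulty remains beyond this alignment of cases, so the corollary is a genuine direct consequence of the three preceding theorems.
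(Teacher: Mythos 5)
Your proposal is correct and coincides with the paper's own (implicit) argument: the paper gives no separate proof of this corollary, treating it exactly as you do, namely as a direct specialization of Theorems \ref{T1}, \ref{T2} and \ref{T3} with $\lambda=0$, $B_1=A-B$, $B_2=-B(A-B)$ and the resulting simplification of $\mathcal{L}$, $\mathcal{F}$, $k_1$ and $\mathcal{N}$. Your identification of the coefficients of $(1+Az)/(1+Bz)$ and the translation of each case condition are exactly the computations the authors intend.
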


\begin{corollary}
\textit{Let }$\gamma =1$ and $\lambda =1.$ If both functions $f$ of the form
(\ref{1}) and its inverse maps $g=f^{-1}$ are in $\mathcal{C}\left[ A,B%
\right] ,$ then using Theorem (\ref{T1}), (\ref{T2}) and (\ref{T3}), we have

$\left( 1\right) $ \ For \ $\gamma \in 
\mathbb{C}
\backslash \left\{ 0\right\} $ and $\mu \in 
\mathbb{C}
,$ \ \ 
\begin{equation*}
\left\vert a_{3}-\mu a_{2}^{2}\right\vert \leq \left\{ 
\begin{array}{c}
\frac{A-B}{6}\text{\ \ \ \ \ \ \ \ \ \ \ \ \ \ \ \ \ \ \ \ \ \ \ \ \ \ \ \ \
\ \ \ \ \ \ \ \ \ \ \ \ \ \ \ \ \ \ \ \ \ if \ }\left\vert B\right\vert
+\left\vert 3\left( 1-\mu \right) \left( A-B\right) -B\right\vert <2, \\ 
\frac{\left( A-B\right) }{12}\left[ \left\vert B\right\vert +\left\vert
3\left( 1-\mu \right) \left( A-B\right) -B\right\vert \right] \text{\ \ \ \
\ if \ }\left\vert B\right\vert +\left\vert 3\left( 1-\mu \right) \left(
A-B\right) -B\right\vert \geq 2.%
\end{array}%
\right.
\end{equation*}

$\left( 2\right) $ \ For \ $\gamma >0$ and $\mu \in 
\mathbb{R}
,$ \ \ 
\begin{equation*}
\left\vert a_{3}-\mu a_{2}^{2}\right\vert \leq \left\{ 
\begin{array}{c}
\frac{\left\vert B\right\vert \left( A-B\right) }{6}-\left( \mu -1\right) 
\frac{\text{ }\left( A-B\right) ^{2}}{4}\text{\ \ \ \ \ \ \ \ \ \ \ \ \ \ \
\ \ \ \ \ \ \ \ \ \ \ \ if \ }\mu \leq 1-\frac{2\left( 1-\left\vert
B\right\vert \right) }{3\left( A-B\right) }, \\ 
\frac{A-B}{6}\text{\ \ \ \ \ \ \ \ \ \ \ \ \ \ \ \ \ \ \ \ \ \ \ \ \ \ \ \ \
\ \ \ \ \ \ \ if \ }1-\frac{2\left( 1-\left\vert B\right\vert \right) }{%
3\left( A-B\right) }<\mu <1+\frac{2\left( 1-\left\vert B\right\vert \right) 
}{3\left( A-B\right) }, \\ 
\frac{\left\vert B\right\vert \left( A-B\right) }{6}+\left( \mu -1\right) 
\frac{\text{ }\left( A-B\right) ^{2}}{4}\text{\ \ \ \ \ \ \ \ \ \ \ \ \ \ \
\ \ \ \ \ \ \ \ \ \ \ \ if \ }\mu \geq 1+\frac{2\left( 1-\left\vert
B\right\vert \right) }{3\left( A-B\right) },%
\end{array}%
\right.
\end{equation*}

$\left( 3\right) $ \ \ For \ $\gamma \in 
\mathbb{C}
\backslash \left\{ 0\right\} $ and $\mu \in 
\mathbb{R}
,$

\begin{equation*}
\left\vert a_{3}-\mu a_{2}^{2}\right\vert \leq \left\{ 
\begin{array}{c}
\text{\ }\left( 1-\mu \right) \frac{\left( A-B\right) ^{2}}{4}+\frac{%
\left\vert B\right\vert \left( A-B\right) \left( 1+\left\vert \sin \theta
\right\vert -\frac{4}{3}\cos \theta \right) }{12}\text{ \ \ \ \ \ \ \ \ \ \
\ \ \ \ \ \ \ \ \ \ \ \ \ \ \ \ \ \ if }\mu \leq 1+\phi _{1}\left(
A,B,\theta \right) , \\ 
\frac{A-B}{6}\text{ \ \ \ \ \ \ \ \ \ \ \ \ \ \ \ \ \ \ \ \ \ \ \ \ \ \ \ \
\ \ \ \ \ \ \ \ \ \ \ \ \ \ \ \ \ \ \ \ \ \ \ \ \ \ \ \ if }1+\phi
_{1}\left( A,B,\theta \right) {\normalsize <}\mu <1-\phi _{2}\left(
A,B,\theta \right) , \\ 
\frac{\left\vert B\right\vert \left( A-B\right) \left( 1+\left\vert \sin
\theta \right\vert +\frac{4}{3}\cos \theta \right) }{12}-\left( 1-\mu
\right) \frac{\left( A-B\right) ^{2}}{4}\text{\ \ \ \ \ \ \ \ \ \ \ \ \ \ \
\ \ \ \ \ \ \ \ \ \ \ \ \ \ if \ }\mu \geq 1-\phi _{2}\left( A,B,\theta
\right) ,%
\end{array}%
\right.
\end{equation*}

where $B_{1}=A-B$, $B_{2}=-B\left( A-B\right) $, $-1\leq B<A\leq 1,$ $\phi
_{1}\left( A,B,\theta \right) =\frac{\left\vert B\right\vert \left(
1+\left\vert \sin \theta \right\vert -\cos \theta \right) -2}{3\left(
A-B\right) }$ and $\phi _{2}\left( A,B,\theta \right) =\frac{\left\vert
B\right\vert \left( 1+\left\vert \sin \theta \right\vert +\cos \theta
\right) -2}{3\left( A-B\right) }.$
\end{corollary}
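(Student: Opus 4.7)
The plan is to recognize this corollary as a direct specialization of Theorems \ref{T1}, \ref{T2}, and \ref{T3} to the Janowski convex class. First, I would observe that $\mathcal{C}[A,B]$ coincides with $\mathcal{S}(\lambda,\gamma;\varphi)$ under the identification $\lambda = 1$, $\gamma = 1$, and $\varphi(z) = (1+Az)/(1+Bz)$, so the three theorems apply with no modification.

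Next, I would read off the Taylor coefficients of the Janowski generating function: expanding
\[
\varphi(z) = 1 + (A-B)z - B(A-B)z^{2} + \cdots
\]
yields $B_{1} = A-B > 0$ and $B_{2} = -B(A-B)$, together with $|B_{2}| = |B|(A-B)$ since $A > B$. With $\lambda = 1$ the structural constants are $(1+\lambda)=2$, $(1+2\lambda)=3$, and $(1+\lambda)^{2}=4$, so the universal prefactors of the three theorems collapse to
\[
\frac{B_{1}|\gamma|}{2(1+2\lambda)} = \frac{A-B}{6}, \qquad \frac{B_{1}|\gamma|}{4(1+2\lambda)} = \frac{A-B}{12}, \qquad \frac{|\gamma|^{2}B_{1}^{2}}{(1+\lambda)^{2}} = \frac{(A-B)^{2}}{4},
\]
where I take $\gamma=1$ in Parts (1) and (2) and retain $|\gamma|$ and $\theta=\arg\gamma$ in Part (3) as the theorem demands.

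Substituting these data into each theorem is then a matter of simplification. For Part (1), the quantity $\mathcal{L}$ of Theorem \ref{T1} becomes $|{-B}+3(1-\mu)(A-B)|+|B|$, which reproduces the dichotomy at the threshold $|B|+|3(1-\mu)(A-B)-B|\lessgtr 2$. For Part (2), the quantity $\mathcal{F}$ of Theorem \ref{T2} simplifies to $2(1-|B|)/(3(A-B))$, yielding the three cases about the thresholds $1\mp 2(1-|B|)/(3(A-B))$. For Part (3), one computes
\[
k_{1} = \frac{-B\,e^{i\theta}}{3(A-B)}, \qquad \Re(k_{1}) = \frac{-B\cos\theta}{3(A-B)}, \qquad \mathcal{N} = \frac{|B|(1+|\sin\theta|)-2}{3(A-B)},
\]
and matching $1-\Re(k_{1})\pm\mathcal{N}$ against the thresholds in Theorem \ref{T3} produces the auxiliary quantities $\phi_{1}$ and $\phi_{2}$.

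The main obstacle is purely bookkeeping. I must track the sign of $B$ in order to convert $-B\cos\theta$ into $\pm|B|\cos\theta$, so that $1-\Re(k_{1})+\mathcal{N}$ reassembles into $1+\phi_{1}$ and $1-\Re(k_{1})-\mathcal{N}$ into $1-\phi_{2}$ as displayed; and I must check that in each case region the triangle-inequality bounds coming from \eqref{p17}, \eqref{p21}, and \eqref{p32} collapse cleanly to the right-hand sides listed in the corollary. No new analytic ingredient is required beyond careful substitution.
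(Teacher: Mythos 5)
Your outline is exactly the paper's (implicit) derivation: the corollary is stated without a separate proof, as a direct substitution of $\lambda=1$, $B_{1}=A-B$, $B_{2}=-B(A-B)$ into Theorems \ref{T1}, \ref{T2} and \ref{T3}, and your computed values of $\mathcal{L}$, $\mathcal{F}$, $k_{1}$, $\Re(k_{1})$ and $\mathcal{N}$ all agree with what that substitution yields. The one caveat, which you rightly flag as sign bookkeeping, is that rewriting $B\cos\theta$ as $-\left\vert B\right\vert\cos\theta$ to assemble $\phi_{1}$ and $\phi_{2}$ requires $B\le 0$; note also that carrying your substitution through part (3) produces a coefficient $\cos\theta$ rather than the $\tfrac{4}{3}\cos\theta$ printed in the corollary's first and third bounds, so your computation, done carefully, would actually correct an apparent slip in the stated constants rather than reproduce it.
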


\begin{corollary}
Let $\gamma \in 
\mathbb{C}
\backslash \left\{ 0\right\} $ and $\lambda =0.$ If both functions $f$ of
the form (\ref{1}) and its inverse maps $g=f^{-1}$ are in $\mathcal{S}^{\ast
}[\gamma ],$ then similarly, using Theorem (\ref{T1}), (\ref{T2}) and (\ref%
{T3}), we obtain
$\left( i\right) $ \ For \ $\gamma \in 
\mathbb{C}
\backslash \left\{ 0\right\} $ and $\mu \in 
\mathbb{C}
,$ \ \ 
\begin{equation*}
\left\vert a_{3}-\mu a_{2}^{2}\right\vert \leq \left\{ 
\begin{array}{c}
\left\vert \gamma \right\vert \text{\ \ \ \ \ \ \ \ \ \ \ \ \ \ \ \ \ \ \ \
\ \ \ \ \ \ \ \ \ \ \ \ \ \ \ \ \ \ \ \ \ if \ }\left\vert 1+\left( 1-\mu
\right) 8\gamma \right\vert <1, \\ 
\frac{\left\vert \gamma \right\vert }{2}\text{\ }\left[ \left\vert 1+\left(
1-\mu \right) 8\gamma \right\vert +1\right] \text{\ \ \ \ \ \ \ if \ }%
\left\vert 1+\left( 1-\mu \right) 8\gamma \right\vert \geq 1.%
\end{array}%
\right.
\end{equation*}

$\left( ii\right) $ \ For \ $\gamma >0$ and $\mu \in 
\mathbb{R}
,$ \ \ 
\begin{equation*}
\left\vert a_{3}-\mu a_{2}^{2}\right\vert \leq \left\{ 
\begin{array}{c}
\gamma -4\left( \mu -1\right) \gamma ^{2}\text{\ \ \ \ \ \ if \ }\mu \leq 1,
\\ 
\gamma +4\left( \mu -1\right) \gamma ^{2}\text{\ \ \ \ \ \ \ if \ }\mu >1,%
\end{array}%
\right.
\end{equation*}

$\left( iii\right) $ \ \ For \ $\gamma \in 
\mathbb{C}
\backslash \left\{ 0\right\} $ and $\mu \in 
\mathbb{R}
,$

\begin{equation*}
\left\vert a_{3}-\mu a_{2}^{2}\right\vert \leq \left\{ 
\begin{array}{c}
\text{\ }4\left\vert \gamma \right\vert ^{2}\left( 1-\mu \right) +\frac{%
\left\vert \gamma \right\vert \left( 1+\left\vert \sin \theta \right\vert
-\cos \theta \right) }{2}\text{ \ \ \ \ \ \ \ \ \ \ \ \ \ \ \ \ \ \ \ \ \ \
\ \ \ \ if }\mu \leq 1+\psi _{1}\left( \gamma ,\theta \right) , \\ 
\left\vert \gamma \right\vert \text{\ \ \ \ \ \ \ \ \ \ \ \ \ \ \ \ \ \ \ \
\ \ \ \ \ \ \ \ \ \ \ \ \ \ \ \ \ \ \ \ \ \ \ \ \ \ \ \ if }1+\psi
_{1}\left( \gamma ,\theta \right) {\normalsize <}\mu <1-\psi _{2}\left(
\gamma ,\theta \right) , \\ 
\frac{\left\vert \gamma \right\vert \left( 1+\left\vert \sin \theta
\right\vert -\cos \theta \right) }{2}-4\left\vert \gamma \right\vert
^{2}\left( 1-\mu \right) \text{\ \ \ \ \ \ \ \ \ \ \ \ \ \ \ \ \ \ \ \ \ \ \
\ \ \ \ if \ }\mu \geq 1-\psi _{2}\left( \gamma ,\theta \right) ,%
\end{array}%
\right.
\end{equation*}

where $B_{1}=2$, $B_{2}=2$, $\psi _{1}\left( \gamma ,\theta \right) =\frac{%
\left( \left\vert \sin \theta \right\vert -\cos \theta -1\right) }{%
8\left\vert \gamma \right\vert \text{\ }}$ and $\psi _{2}\left( \gamma
,\theta \right) =\frac{\left( \left\vert \sin \theta \right\vert +\cos
\theta -1\right) }{8\left\vert \gamma \right\vert \text{\ }}$.
\end{corollary}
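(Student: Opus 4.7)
The plan is to observe that the corollary is a direct specialization of Theorems \ref{T1}, \ref{T2} and \ref{T3} obtained by taking $\varphi(z)=(1+z)/(1-z)=1+2z+2z^{2}+\cdots$, so that $B_{1}=B_{2}=2$, together with $\lambda=0$, which identifies $\mathcal{S}(\lambda,\gamma;\varphi)$ with $\mathcal{S}^{\ast}[\gamma]$. With these substitutions one has $(1+\lambda)=(1+2\lambda)=1$, and the recurring building blocks of the three theorems simplify to $\frac{B_{1}|\gamma|}{2(1+2\lambda)}=|\gamma|$, $\frac{B_{1}|\gamma|}{4(1+2\lambda)}=\frac{|\gamma|}{2}$ and $\frac{|\gamma|^{2}B_{1}^{2}}{(1+\lambda)^{2}}=4|\gamma|^{2}$, which drives the clean form of the bounds in all three parts.

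For part (i) I would simply plug these values into Theorem \ref{T1}. The quantity $\mathcal{L}=\left|\frac{B_{2}}{B_{1}}+(1-\mu)\frac{4B_{1}\gamma(1+2\lambda)}{(1+\lambda)^{2}}\right|+\left|\frac{B_{2}}{B_{1}}\right|$ collapses to $|1+8(1-\mu)\gamma|+1$, so the dichotomy $\mathcal{L}\leq 2$ versus $\mathcal{L}\geq 2$ of Theorem \ref{T1} becomes $|1+8(1-\mu)\gamma|\leq 1$ versus $|1+8(1-\mu)\gamma|\geq 1$, and the two branches read off directly as $|\gamma|$ and $\frac{|\gamma|}{2}\bigl[|1+8(1-\mu)\gamma|+1\bigr]$.

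Part (ii) follows by applying Theorem \ref{T2}. Since $|B_{2}|=B_{1}=2$ we are in case (1) of that theorem; in fact the parameter $\mathcal{F}$ associated with case (2) vanishes, so no middle interval survives. Substituting yields $\gamma-4(\mu-1)\gamma^{2}$ for $\mu\leq 1$ and $\gamma+4(\mu-1)\gamma^{2}$ for $\mu>1$, exactly as claimed.

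Part (iii) is the most delicate. I would apply Theorem \ref{T3} after computing $k_{1}=e^{i\theta}/(8|\gamma|)$, so that $\Re(k_{1})=\cos\theta/(8|\gamma|)$, and $\mathcal{N}=(|\sin\theta|-1)/(8|\gamma|)$. Since $\frac{(1+|\sin\theta|)|B_{2}|}{2B_{1}}=\frac{1+|\sin\theta|}{2}\leq 1$, case (2) of Theorem \ref{T3} applies. A short calculation shows that the breakpoints $1-\Re(k_{1})\pm\mathcal{N}$ coincide with $1+\psi_{1}(\gamma,\theta)$ and $1-\psi_{2}(\gamma,\theta)$ from the statement, and combining the $-|\gamma|\cos\theta/2$ arising from $-4|\gamma|^{2}\Re(k_{1})$ with the $|\gamma|(1+|\sin\theta|)/2$ coming from $\frac{|\gamma||B_{2}|(1+|\sin\theta|)}{4(1+2\lambda)}$ produces $\frac{|\gamma|(1+|\sin\theta|\mp\cos\theta)}{2}$ in the two outer branches. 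The main (and only) obstacle is careful sign bookkeeping in this last identification; no new estimate beyond Theorems \ref{T1}--\ref{T3} is required.
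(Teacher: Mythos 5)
Your proposal is correct and is exactly the route the paper intends: the corollary is stated without a separate proof, being the direct specialization $B_{1}=B_{2}=2$, $\lambda=0$ of Theorems \ref{T1}--\ref{T3}, and all of your substitutions ($\mathcal{L}=|1+8(1-\mu)\gamma|+1$, $\mathcal{F}=0$, $k_{1}=e^{i\theta}/(8|\gamma|)$, $\mathcal{N}=(|\sin\theta|-1)/(8|\gamma|)$, and the identification of the breakpoints with $1+\psi_{1}$ and $1-\psi_{2}$) check out. One remark: your $\mp\cos\theta$ bookkeeping in the two outer branches of part (iii) is what the substitution actually yields (and matches the analogous corollary for $\mathcal{S}[A,B]$), so the printed third branch with $1+|\sin\theta|-\cos\theta$ appears to contain a sign typo rather than indicating an error in your argument.
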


\begin{corollary}
\textit{Let } $\gamma \in 
\mathbb{C}
\backslash \left\{ 0\right\} \ $and $\lambda =1.$ Let both functions $f$ of
the form (\ref{1}) and its inverse maps $g=f^{-1}$ are in $\mathcal{C}%
[\gamma ].$ Then similarly, using Theorem (\ref{T1}), (\ref{T2}) and (\ref%
{T3}), we have

$\left( i\right) $ \ For \ $\gamma \in 
\mathbb{C}
\backslash \left\{ 0\right\} $ and $\mu \in 
\mathbb{C}
,$ \ \ 
\begin{equation*}
\left\vert a_{3}-\mu a_{2}^{2}\right\vert \leq \left\{ 
\begin{array}{c}
\frac{\left\vert \gamma \right\vert }{3}\text{\ \ \ \ \ \ \ \ \ \ \ \ \ \ \
\ \ \ \ \ \ \ \ \ \ \ \ \ \ if \ }\left\vert 1+\left( 1-\mu \right) 6\gamma
\right\vert <1, \\ 
\frac{\left\vert \gamma \right\vert }{2}\text{\ }\left[ \left\vert 1+\left(
1-\mu \right) 6\gamma \right\vert +1\right] \text{\ \ \ \ \ \ \ if \ }%
\left\vert 1+\left( 1-\mu \right) 6\gamma \right\vert \geq 1.%
\end{array}%
\right.
\end{equation*}

$\left( ii\right) $ \ For \ $\gamma >0$ and $\mu \in 
\mathbb{R}
,$ \ \ 
\begin{equation*}
\left\vert a_{3}-\mu a_{2}^{2}\right\vert \leq \left\{ 
\begin{array}{c}
\frac{\gamma }{3}-\left( \mu -1\right) \gamma ^{2}\text{\ \ \ \ \ \ if \ }%
\mu \leq 1, \\ 
\frac{\gamma }{3}+\left( \mu -1\right) \gamma ^{2}\text{\ \ \ \ \ \ \ if \ }%
\mu >1,%
\end{array}%
\right.
\end{equation*}

$\left( iii\right) $ \ \ For \ $\gamma \in 
\mathbb{C}
\backslash \left\{ 0\right\} $ and $\mu \in 
\mathbb{R}
,$

\begin{equation*}
\left\vert a_{3}-\mu a_{2}^{2}\right\vert \leq \left\{ 
\begin{array}{c}
\text{\ }\left\vert \gamma \right\vert ^{2}\left( 1-\mu \right) +\frac{%
\left\vert \gamma \right\vert \left( 1+\left\vert \sin \theta \right\vert
-\cos \theta \right) }{6}\text{ \ \ \ \ \ \ \ \ \ \ \ \ \ \ \ \ \ \ \ \ \ \
\ \ \ \ if }\mu \leq 1+\phi _{1}\left( \gamma ,\theta \right) , \\ 
\frac{\left\vert \gamma \right\vert }{3}\text{\ \ \ \ \ \ \ \ \ \ \ \ \ \ \
\ \ \ \ \ \ \ \ \ \ \ \ \ \ \ \ \ \ \ \ \ \ \ \ \ \ \ \ \ \ \ \ if }1+\phi
_{1}\left( \gamma ,\theta \right) {\normalsize <}\mu <1-\phi _{2}\left(
\gamma ,\theta \right) , \\ 
\frac{\left\vert \gamma \right\vert \left( 1+\left\vert \sin \theta
\right\vert -\cos \theta \right) }{6}-\left\vert \gamma \right\vert
^{2}\left( 1-\mu \right) \text{\ \ \ \ \ \ \ \ \ \ \ \ \ \ \ \ \ \ \ \ \ \ \
\ \ \ \ if \ }\mu \geq 1-\phi _{2}\left( \gamma ,\theta \right) ,%
\end{array}%
\right.
\end{equation*}

where $B_{1}=2$, $B_{2}=2$, $\phi _{1}\left( \gamma ,\theta \right) =\frac{%
\left( \left\vert \sin \theta \right\vert -\cos \theta -1\right) }{%
6\left\vert \gamma \right\vert \text{\ }}$ and $\phi _{2}\left( \gamma
,\theta \right) =\frac{\left( \left\vert \sin \theta \right\vert +\cos
\theta -1\right) }{6\left\vert \gamma \right\vert \text{\ }}$.

\begin{acknowledgement}
The research of E. Deniz and M. Ça\u{g}lar were supported by the Commission
for the Scientific Research Projects of Kafkas University, project number
2016-FM-67.
\end{acknowledgement}
\end{corollary}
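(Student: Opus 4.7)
The plan is to recognize $\mathcal{C}[\gamma]$ as the specialization of $\mathcal{S}(\lambda,\gamma;\varphi)$ at $\lambda=1$ and $\varphi(z)=(1+z)/(1-z)$, and then read each of the three parts off Theorems \ref{T1}, \ref{T2}, and \ref{T3} by direct substitution. Before touching any case analysis, I would expand $(1+z)/(1-z)=1+2z+2z^{2}+\cdots$ to obtain $B_{1}=B_{2}=2$, and compute once and for all the two combinations that drive every statement in the three theorems at $\lambda=1$: $B_{2}/B_{1}=1$ and $\frac{4B_{1}\gamma(1+2\lambda)}{(1+\lambda)^{2}}=6\gamma$. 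Every auxiliary symbol $\mathcal{L}$, $\mathcal{F}$, $k_{1}$, $\mathcal{N}$ appearing in the theorems is assembled from these together with $|B_{2}|$ and $|\sin\theta|$, so the proofs of the three parts reduce to plugging in.

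Part $(i)$ then follows from Theorem \ref{T1}: the substitution gives $\mathcal{L}=|1+6(1-\mu)\gamma|+1$, so the dichotomy $\mathcal{L}\leq 2$ versus $\mathcal{L}\geq 2$ coincides with the dichotomy $|1+6(1-\mu)\gamma|\leq 1$ versus $\geq 1$ written in the corollary, and the leading constants collapse to the values stated. For part $(ii)$ I would apply Theorem \ref{T2}; the crucial observation is that because $B_{1}=|B_{2}|=2$, we sit on the boundary between the two cases of that theorem, and the parameter $\mathcal{F}=\frac{(1+\lambda)^{2}(B_{1}-|B_{2}|)}{2\gamma B_{1}^{2}(1+2\lambda)}$ vanishes. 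Hence the middle slab $1-\mathcal{F}<\mu<1+\mathcal{F}$ collapses to the single point $\mu=1$, and only the two outer branches $\mu\leq 1$ and $\mu>1$ of case $(1)$ survive, yielding the two-line formula in the corollary after substituting $|B_{2}|=B_{1}=2$ and $\lambda=1$.

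For part $(iii)$ I would invoke Theorem \ref{T3}. Since $B_{1}=B_{2}=2$, the discriminating ratio $\frac{(1+|\sin\theta|)|B_{2}|}{2B_{1}}=\frac{1+|\sin\theta|}{2}\leq 1$, so we fall into case $(2)$ of Theorem \ref{T3}. The remaining work is arithmetic: compute $k_{1}=e^{i\theta}/(6|\gamma|)$, whence $\Re(k_{1})=\cos\theta/(6|\gamma|)$, and $\mathcal{N}=(|\sin\theta|-1)/(6|\gamma|)$. Substituting into the three branches of case $(2)$ and matching the thresholds $1-\Re(k_{1})\pm\mathcal{N}$ against $1+\phi_{1}(\gamma,\theta)$ and $1-\phi_{2}(\gamma,\theta)$ produces exactly the three bounds in the statement.

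The only real obstacle is the bookkeeping in part $(iii)$: one must verify that the signs in $-\Re(k_{1})\pm\mathcal{N}$ align correctly with $\phi_{1}$ and $\phi_{2}$ as defined — the $\cos\theta$ term coming from $\Re(k_{1})$ while the $|\sin\theta|-1$ contribution comes from $\mathcal{N}$ — and that the combined scaling factors $\frac{|\gamma|^{2}B_{1}^{2}}{(1+\lambda)^{2}}=|\gamma|^{2}$ and $\frac{|\gamma||B_{2}|(1+|\sin\theta|)}{4(1+2\lambda)}=\frac{|\gamma|(1+|\sin\theta|)}{6}$ combine with $-|\gamma|^{2}\Re(k_{1})=-|\gamma|\cos\theta/6$ to produce the asserted $\frac{|\gamma|(1+|\sin\theta|-\cos\theta)}{6}$. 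Once this matching is in place, the three claimed inequalities follow at once, and no genuinely new argument beyond Theorems \ref{T1}--\ref{T3} is required.
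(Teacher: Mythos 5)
Your proposal is correct and is exactly the route the paper intends: the corollary is stated without a separate proof beyond ``using Theorems \ref{T1}, \ref{T2} and \ref{T3}'', and your specialization $\lambda=1$, $\varphi(z)=(1+z)/(1-z)$, $B_{1}=B_{2}=2$, $\frac{4B_{1}\gamma(1+2\lambda)}{(1+\lambda)^{2}}=6\gamma$, $k_{1}=e^{i\theta}/(6|\gamma|)$, $\mathcal{N}=(|\sin\theta|-1)/(6|\gamma|)$ is the whole argument. One caution: you assert that the constants ``collapse to the values stated,'' but carrying out your own substitution gives $\frac{B_{1}|\gamma|}{4(1+2\lambda)}=\frac{|\gamma|}{6}$ (not the printed $\frac{|\gamma|}{2}$) in the second branch of part $(i)$, and $+\cos\theta$ (not $-\cos\theta$) in the last branch of part $(iii)$ since that branch carries $+\,|\gamma|^{2}\Re(k_{1})=+\,|\gamma|\cos\theta/6$; these appear to be typographical errors in the corollary that a faithful execution of your plan would detect rather than reproduce.
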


\bigskip

\end{document}